\newtheorem{theorem}{Theorem}[section]
\newtheorem{cor}[theorem]{Corollary}
\numberwithin{equation}{section}
\theoremstyle{definition}
\newtheorem{definition}[theorem]{Definition}
\newtheorem{remark}[theorem]{Remark}
 \DeclareMathOperator{\Vol}{Vol}
\begin{document}
 \title[Fourier multipliers on quantum Euclidean spaces]{Fourier multipliers and their applications to PDE on the quantum Euclidean space}

\author[Michael Ruzhansky]{Michael Ruzhansky}
\address{
 Michael Ruzhansky:
  \endgraf
 Department of Mathematics: Analysis, Logic and Discrete Mathematics,
  \endgraf
 Ghent University, Ghent,
 \endgraf
  Belgium 
  \endgraf
  and 
 \endgraf
 School of Mathematical Sciences, Queen Mary University of London, London,
 \endgraf
 UK  
 \endgraf
  {\it E-mail address} {\rm michael.ruzhansky@ugent.be}
  }

\author[Serikbol Shaimardan]{Serikbol Shaimardan}
\address{
  Serikbol Shaimardan:
  \endgraf
  \endgraf
Department of Mathematics: Analysis, Logic and Discrete Mathematics
  \endgraf
 Ghent University, Ghent,
 \endgraf
  Belgium
  \endgraf
   and 
    \endgraf
  Institute of Mathematics and Mathematical Modeling, 050010, Almaty, 
  \endgraf
  Kazakhstan 
  \endgraf
  {\it E-mail address} {\rm shaimardan.serik@gmail.com} 
  }

 \author[Kanat Tulenov]{Kanat Tulenov}
\address{
  Kanat Tulenov:
   \endgraf
  Department of Mathematics: Analysis, Logic and Discrete Mathematics
  \endgraf
 Ghent University, Ghent,
 \endgraf
  Belgium
  \endgraf
   and 
   \endgraf
  Institute of Mathematics and Mathematical Modeling, 050010, Almaty, 
  \endgraf
  Kazakhstan 
  \endgraf
  {\it E-mail address} {\rm tulenov@math.kz} 
  }

\date{}

\begin{abstract}
In this work, we present some applications of the $L^p$-$L^q$ boundedness of Fourier multipliers to PDEs on the noncommutative (or quantum) Euclidean space. More precisely, we establish $L^p$-$L^q$ norm estimates for solutions of heat, wave, and Schr\"odinger type equations with Caputo fractional derivative in the case $1 < p \leq 2 \leq q < \infty.$ 
Moreover, we obtain well-posedness of nonlinear heat and wave equations on the noncommutative Euclidean space.
\end{abstract}

\subjclass[2020]{46L51, 46L52,  47L25, 11M55, 46E35, 42B15, 42B05, 43A50, 42A16, 32W30}

\keywords{Noncommutative Euclidean space, Fourier multipliers, nonlinear PDE, heat equation, wave equation, Schr\"odinger type equation.}

\maketitle

\tableofcontents
{\section{Introduction}}

 The noncommutative or quantum Euclidean space $\mathbb{R}^d_{\theta}$ defined by an arbitrary skew-symmetric real $d\times d$ matrix $\theta$, which is also known as Moyal space or Phase space \cite{BW, BB, DLMS,  Gro, M, Rieffel, W}, is the main objective in the noncommutative geometry (see \cite{CGRS,  GGSVM, green-book}), since it plays the role of the simplest noncompact manifold. In quantum mechanics, quantum Euclidean spaces serve as models for interactions between classical systems and their corresponding quantum systems, often termed hybrid systems. The operators defining in $\mathbb{R}^d_{\theta},$ known as quantum-classical Weyl operators, have been extensively studied in \cite{BW, DW}. Convolutions in $\mathbb{R}^d_{\theta}$ in the case when $\theta$ is invertable, real, skew-symmetric matrix was first introduced in \cite{W}. The associativity of such convolutions was established there, along with the importance of the Fourier-Weyl transform, which in this work is presented as the Fourier transform on $\mathbb{R}^d_{\theta}.$ The general case for arbitrary $\theta$ is discussed in \cite{DW}. Additionally, \cite{W} establishes numerous analogs of classical results, including a Young-type and Hausdorff-Young inequalities. There are many equivalent constructions of noncommutative Euclidean spaces that appeared in the literature. One of the ways defining them is to construct a von Neumann algebra $\mathbb{R}^d_{\theta}$ with a twisted left-regular representation of $\mathbb{R}^d$ on the Hilbert space $L^2(\mathbb{R}^d).$ More precisely, the von Neumann algebra $\mathbb{R}^{d}_{\theta}$ is generated by the $d$-parameter strongly
continuous unitary family $\{U_{\theta}(t)\}_{t\in \mathbb{R}^{d}}$ satisfying the relation
\begin{equation}\label{weyl-relation}
U_{\theta}(t)U_{\theta}(s)=e^{\frac{1}{2}i(t,\theta s)}U_{\theta}(t+s),\quad t,s\in \mathbb{R}^d,
\end{equation}
where $(\cdot,\cdot)$ means the usual inner product in $\mathbb{R}^d.$ For more details and recent results on this theory, we refer the reader to \cite{GJM, GGSVM, GJP, GJP2, MSX, Mc, Tulenov1, RST2}. We associate the algebra $\mathbb{R}^{d}_{\theta}$ with a semifinite normal trace $\tau_{\theta},$ which coincides with the Lebesgue integral when $\theta$ is zero matrix, thereby providing a concept of integration
for our von Neuman algebra $\mathbb{R}^{d}_{\theta}.$ If we are given a von Neuman algebra with a trace, then there is a theory to define noncommutative $L^p$ spaces \cite{DPS, FK, LSZ, PXu}.
As a particular case of the theory of $L^p$ spaces associated with semifinite traces on von Neumann algebras, we can construct $L^p$ spaces over the noncommutative Euclidean space, which we denote as $L^{p}(\mathbb{R}^{d}_{\theta}).$ These spaces reduce to the classical 
$L^p$ spaces on Euclidean space when $\theta=0$ and when $\theta\neq 0$ are spaces of operators affiliated with the von Neumann algebra $\mathbb{R}^{d}_{\theta}.$

One of the main advantages of this von Neumann algebra $\mathbb{R}^{d}_{\theta}$ is the ability to define various analogs of the classical harmonic analysis tools, including differential operators. These capabilities enable the study of partial differential equations within the noncommutative framework. 
 Recent advances in harmonic analysis on these spaces reveal some of their unique characteristics. Notably, the behaviour of nonlinear partial differential equations in this noncommutative context exhibits surprising features. Significant progress on nonlinear equations in noncommutative spaces has been made by Rosenberg \cite{R}, who explored nonlinear partial differential equations on the noncommutative torus. Later, in \cite{Zhao} Zhao proved the Strichartz estimates and applied it for the Schr\"odinger equations on noncommutative Euclidean space. In \cite{Mc}, McDonald explored fundamental aspects of paradifferential calculus for noncommutative Euclidean spaces and presented applications to nonlinear evolution equations. Additionally, he showed how certain equations become much simpler to analyze in a purely noncommutative framework. We can also mention a recent work by the authors \cite{RST2}, where they examine the Sobolev inequality in noncommutative Euclidean spaces, derive a Gagliardo–Nirenberg type inequality as a consequence, and apply it to prove the global well-posedness of nonlinear partial differential equations in this noncommutative setting. On the other hand, recently the Hardy-Littlewood-Sobolev inequalities were also studied in the quantum phase space by Lafleche in \cite{L}. The quantum phase space is unitarily equivalent to the particular case of our noncommutative Euclidean space when $\theta$ is invertable skew-symmetric matrix. A key distinction in our work is the consideration of cases where the matrix $\theta$ may not be invertible. When $\theta$ is invertible, the $L^p$-spaces on $\mathbb{R}^d_{\theta}$ are isomorphic to the Schatten classes, and in this setting, there exist well-defined embeddings between $L^p$-spaces for different $p.$  However, in the general such inclusions may no longer hold, leading to significant differences in the analysis. This distinction plays a crucial role in the inequalities and results we derive for PDEs.

 In this work we consider general case. These studies provide strong motivation for further exploration of nonlinear partial differential equations within the noncommutative Euclidean space and offer additional insight into this field.
 
In our recent paper \cite{Tulenov1}, we studied Fourier multipliers on quantum Euclidean spaces and obtained a quantum analogue of the classical H\"ormander Fourier multiplier theorem settled in \cite[Theorem 1.11]{Hor}. Indeed, we established $L^p$-$L^q$ boundedness of the Fourier multipliers on quantum Euclidean spaces for the case $1 < p \leq  2\leq q <\infty.$ In this note, we present many applications of this result to nonlinear PDEs on the quantum Euclidean space. Namely, we derive $L^p$-$L^q$ norm estimates for $\mathcal{L}$-heat, $\mathcal{L}$-wave as well as Schr\"odinger type equations within the range $1 < p \leq 2 \leq q < +\infty$. We also explore the existence, uniqueness, and time-decay behavior of these equations. Similar investigations can be found from the papers \cite{AR, CKRT, SJR1, SJR2, KR}  for several types of groups and spaces. {\color{red} When $\theta=0$ and $\mathcal{L}= (-\Delta_{\theta})^\frac{s}{2}, \quad 0<s\leq 2,$ similar problems were studied in \cite{KSZ,XSM2,QH, XSM} and references therein.}

\;\;\;\;\;\;\;
\section{Preliminaries}
\subsection{Noncommutative (NC) Euclidean space $ \mathbb{R}^{d}_{\theta} $} \label{NC Euclidean space}

For a detailed discussion of the noncommutative (NC) Euclidean space $ \mathbb{R}^{d}_{\theta} $ and further information, we refer the reader to \cite{GJM}, \cite{GGSVM}, \cite{MSX}, and \cite{Mc}.

Let $ H $ be a Hilbert space, and denote by $ B(H) $ the algebra of all bounded linear operators on $ H $. We denote by $ L^p(\mathbb{R}^d) $ (for $ 1 \leq p < \infty $) the set of all pointwise almost-everywhere equivalence classes of $ p $-integrable functions, while $ L^{\infty}(\mathbb{R}^d) $ is the space of essentially bounded functions on the Euclidean space $ \mathbb{R}^d $. For $ 1 \leq p \leq \infty $, the weak-$ L^p $ space on $ \mathbb{R}^d $ consists of all complex-valued measurable functions on $ \mathbb{R}^d $ for which the following quasinorm is finite:
\begin{equation}
\|f\|_{L^{p,\infty}(\mathbb{R}^d)} = \sup\limits_{t>0} t^\frac{1}{p} f^*(t),
\end{equation}
where $ f^* $ is the decreasing rearrangement of $ f $. These spaces are quasi-Banach spaces; for more information, see \cite{Grafakos}.

Given an integer $ d \geq 1 $, fix an skew-symmetric $ \mathbb{R} $-valued $ d \times d $ matrix $ \theta = \{\theta_{j,k}\}_{1 \leq j,k \leq d} $.

\begin{definition}\label{NC_E_space1}
Define $ \mathbb{R}^{d}_{\theta} $ as the von Neumann algebra generated by the $ d $-parameter strongly continuous unitary family $ \{U_{\theta}(t)\}_{t \in \mathbb{R}^{d}} $ satisfying the relation
\begin{equation}\label{weyl-relation}
U_{\theta}(t)U_{\theta}(s) = e^{\frac{1}{2}i(t,\theta s)} U_{\theta}(t+s), \quad t,s \in \mathbb{R}^d,
\end{equation}
where $ (\cdot,\cdot) $ denotes the usual inner product in $ \mathbb{R}^d $.
\end{definition}

The above relation is known as the Weyl form of the canonical commutation relation. While $ \mathbb{R}^{d}_{\theta} $ can be defined abstractly using operator-theoretic methods as in \cite{GGSVM}, it can also be described concretely as generated by a specific family of operators on the Hilbert space $ L^2(\mathbb{R}^d) $.

\begin{definition} \cite[Definition 2.1]{Mc}\label{NC_E_space2}
For $ t \in \mathbb{R}^d $, define $ U_{\theta}(t) $ as the operator on $ L^{2}(\mathbb{R}^{d}) $ given by
\[
(U_{\theta}(t)\xi)(s) = e^{i(t,s)}\xi(s-\frac{1}{2}\theta t), \quad \xi \in L^{2}(\mathbb{R}^{d}), \, t,s \in \mathbb{R}^d.
\]
It can be shown that the family $ \{U_{\theta}(t)\}_{t \in \mathbb{R}^{d}} $ is strongly continuous and satisfies relation \eqref{weyl-relation}. The von Neumann algebra $ \mathbb{R}^{d}_{\theta} $ is then defined as the weak operator topology closed subalgebra of $ B(L^{2}(\mathbb{R}^{d})) $ generated by the family $ \{U_{\theta}(t)\}_{t \in \mathbb{R}^{d}} $, and is referred to as a noncommutative (or quantum) Euclidean space.
\end{definition}

Note that if $ \theta = 0 $, the definitions above reduce to $ L^{\infty}(\mathbb{R}^{d}) $ as the algebra of bounded pointwise multipliers on $ L^{2}(\mathbb{R}^{d}) $. The structure of $ \mathbb{R}^{d}_{\theta} $ is determined by the Stone-von Neumann theorem, which asserts that if $ \det(\theta) \neq 0 $, then any two $ C^* $-algebras generated by a strongly continuous unitary family $ \{U_{\theta}(t)\}_{t \in \mathbb{R}^{d}} $ satisfying the Weyl relation \eqref{weyl-relation} are $ * $-isomorphic \cite[Theorem 14.8]{H}. Thus, the algebra of essentially bounded functions on $ \mathbb{R}^{d} $ is a special case of the previous definition.

If $ d \geq 2 $, an arbitrary $ d \times d $ skew-symmetric real matrix can be expressed (up to orthogonal conjugation) as a direct sum of a zero matrix, leading to the $ * $-isomorphism
\begin{equation}\label{direct-sum}
    \mathbb{R}^{d}_{\theta} \cong L^{\infty}(\mathbb{R}^{\dim(\ker(\theta))}) \bar{\otimes} B(L^{2}(\mathbb{R}^{\text{rank}(\theta)/2})),
\end{equation}
where $ \bar{\otimes} $ denotes the von Neumann tensor product \cite{MSX}. In particular, if $ \det(\theta) \neq 0 $, then \eqref{direct-sum} simplifies to
\begin{equation}\label{reduced-direct-sum}
    \mathbb{R}^{d}_{\theta} \cong B(L^{2}(\mathbb{R}^{d/2})).
\end{equation}
These formulas are valid because the rank of a skew-symmetric matrix is always even.

\subsection{Noncommutative integration}

Let $ f \in L^{1}(\mathbb{R}^d) $. Define $ \lambda_{\theta}(f) $ as the operator given by
\begin{equation}\label{def-integration}
\lambda_{\theta}(f)\xi = \int_{\mathbb{R}^d} f(t)U_{\theta}(t)\xi \, dt, \quad \xi \in L^{2}(\mathbb{R}^d).
\end{equation}
This integral converges absolutely in the Bochner sense and defines a bounded linear operator $ \lambda_{\theta}(f): L^{2}(\mathbb{R}^d) \to L^{2}(\mathbb{R}^d) $ such that $ \lambda_{\theta}(f) \in \mathbb{R}^{d}_{\theta} $ (see \cite[Lemma 2.3]{MSX}). 

Denote by $ \mathcal{S}(\mathbb{R}^d) $ the classical Schwartz space on $ \mathbb{R}^d $. For any $ f \in \mathcal{S}(\mathbb{R}^d) $, define the Fourier transform as
$$
\widehat{f}(t) = \int_{\mathbb{R}^d} f(s) e^{-i(t,s)} ds, \quad t \in \mathbb{R}^d.
$$
The noncommutative Schwartz space $\mathcal{S}(\mathbb{R}_{\theta}^d)$  is defined as the image of the classical Schwartz space under $ \lambda_{\theta} $, i.e.,
$$
\mathcal{S}(\mathbb{R}_{\theta}^d) := \{x \in \mathbb{R}^{d}_{\theta} : x = \lambda_{\theta}(f) \text{ for some } f \in \mathcal{S}(\mathbb{R}^d)\}.
$$
We define a topology on $\mathcal{S}(\mathbb{R}_{\theta}^d)$
 as the image of the canonical Fr\'{e}chet topology on $\mathcal{S}(\mathbb{R}^d)$ under $\lambda_{\theta}.$ 
 The topological dual of $\mathcal{S}(\mathbb{R}_{\theta}^d)$ will be denoted by $\mathcal{S}'(\mathbb{R}_{\theta}^d).$
The map $\lambda_{\theta}$ is injective \cite[Subsection 2.2.3]{MSX}, \cite{Mc}, and it can be extended to distributions. 
\subsection{Trace on $ \mathbb{R}^{d}_{\theta} $}

Let $ f \in \mathcal{S}(\mathbb{R}^d) $. We define the functional $ \tau_{\theta} : \mathcal{S}(\mathbb{R}_{\theta}^d) \to \mathbb{C} $ by the expression
\begin{equation}\label{trace-def}
\tau_{\theta}(\lambda_{\theta}(f)) = \tau_{\theta}\left(\int_{\mathbb{R}^d} f(\eta) U_{\theta}(\eta) \, d\eta\right) := f(0).
\end{equation}
This functional $ \tau_{\theta} $ can be extended to a semifinite normal trace on $ \mathbb{R}^{d}_{\theta} $. Furthermore, if $ \theta = 0 $, then $ \tau_{\theta} $ corresponds to the Lebesgue integral under an appropriate isomorphism. In the case where $ \det(\theta) \neq 0 $, $ \tau_{\theta} $ (up to normalization) is equivalent to the operator trace on $ B(L^2(\mathbb{R}^{d/2})) $. For further details, see \cite{GJP}, \cite[Lemma 2.7]{MSX}, \cite[Theorem 2.6]{Mc}.

\subsection{Noncommutative $ L^{p}(\mathbb{R}^{d}_{\theta}) $ and $ L^{p,q}(\mathbb{R}^{d}_{\theta}) $ spaces}

With the definitions provided in the previous sections, $ \mathbb{R}^{d}_{\theta} $ is a semifinite von Neumann algebra with the trace $ \tau_{\theta} $. The pair $ (\mathbb{R}^{d}_{\theta}, \tau_{\theta}) $ is known as a noncommutative measure space. For any $ 1 \leq p < \infty $, we can define the $ L^p $-norm on this space using the Borel functional calculus with the formula:
\[
\|x\|_{L^p(\mathbb{R}^d_{\theta})} = \left(\tau_{\theta}(|x|^p)\right)^{1/p}, \quad x \in \mathbb{R}^{d}_{\theta},
\]
where $ |x| := (x^{*}x)^{1/2} $. The completion of the set $ \{x \in \mathbb{R}^{d}_{\theta} : \|x\|_{p} < \infty\} $ with respect to the norm $ \|\cdot\|_{L^p(\mathbb{R}^d_{\theta})} $ is denoted by $ L^p(\mathbb{R}^d_{\theta}) $. Elements of $ L^p(\mathbb{R}^d_{\theta}) $ are $ \tau_{\theta} $-measurable operators, analogous to the commutative case. These operators are linear, densely defined, closed (potentially unbounded), and affiliated with $ \mathbb{R}^{d}_{\theta} $ satisfying $\tau_{\theta}(\mathbf{1}_{(s, \infty)}(|x|)) < \infty $ for some $ s > 0 $. Here, $ \mathbf{1}_{(s, \infty)}(|x|) $ represents the spectral projection associated with the interval $ (s, \infty) $. The set of all $ \tau_{\theta} $-measurable operators is denoted by $ L^{0}(\mathbb{R}^{d}_{\theta}) $. 

For $ x = x^{\ast} \in L^{0}(\mathbb{R}^{d}_{\theta}) $, the \textit{distribution function} of $ x $ is defined by
\[
n_x(s) = \tau_{\theta}\left(\mathbf{1}_{(s, \infty)}(x)\right), \quad -\infty < s < \infty.
\]
For $ x \in L^{0}(\mathbb{R}^{d}_{\theta}) $, the \textit{generalised singular value function} $ \mu(t, x) $ is defined as
\begin{equation}\label{distribution-function}
\mu(t, x) = \inf\left\{s > 0 : n_{|x|}(s) \leq t\right\}, \quad t > 0.
\end{equation}
This function $ t \mapsto \mu(t, x) $ is decreasing and right-continuous. The norm of $ L^p(\mathbb{R}^d_{\theta}) $ can also be expressed in terms of the generalised singular value function (see \cite[Example 2.4.2, p. 53]{DPS}) as
\begin{equation}\label{mu-norm}
\|x\|_{L^p(\mathbb{R}^d_{\theta})} = \left(\int_{0}^{\infty} \mu^{p}(s, x) \, ds\right)^{1/p}, \quad \text{for } p < \infty, \quad \text{and} \quad \|x\|_{L^{\infty}(\mathbb{R}^d_{\theta})} = \mu(0, x), \quad \text{for } p = \infty.
\end{equation}
The latter equality for $ p = \infty $ was proved in \cite[Lemma 2.3.12.(b), p. 50]{DPS}.

The space $ L^{0}(\mathbb{R}^{d}_{\theta}) $ forms a $ * $-algebra, which can be endowed with a topological $ * $-algebra structure using the neighborhood system $ \{V(\varepsilon, \delta) : \varepsilon, \delta > 0\} $, where 
\[
V(\varepsilon, \delta) = \{x \in L^{0}(\mathbb{R}^{d}_{\theta}) : \mu(\varepsilon, x) \leq \delta\}.
\]
This topology renders $ L^{0}(\mathbb{R}^{d}_{\theta}) $ a metrizable topological $ * $-algebra, and convergence in this topology is referred to as \textit{convergence in measure} \cite{PXu}. Next, we define the noncommutative analogue of weak-$ L^p $ spaces associated with the noncommutative Euclidean space.

\begin{definition}
Let $ 1 \leq p \leq \infty $. The noncommutative weak-$ L^p $ (Lorentz) space $ L^{p,\infty}(\mathbb{R}^{d}_{\theta}) $ is defined by
\[
L^{p,\infty}(\mathbb{R}^{d}_{\theta}) := \{x \in L^{0}(\mathbb{R}^{d}_{\theta}) : \|x\|_{L^{p,\infty}(\mathbb{R}^{d}_{\theta})} < \infty\},
\]
where
\[
\|x\|_{L^{p,\infty}(\mathbb{R}^{d}_{\theta})} := \sup_{t > 0} t^{\frac{1}{p}} \mu(t, x).
\]
\end{definition}

These are noncommutative quasi-Banach spaces. For the general theory of $ L^p $ and Lorentz spaces associated with semifinite von Neumann algebras, see \cite{DPS}, \cite{LSZ}, \cite{PXu}.

\subsection{Fourier transform and Fourier multipliers}

\begin{definition}\label{F-transform}
For any $ x \in \mathcal{S}(\mathbb{R}_{\theta}^d) $, the Fourier transform of $ x $ is defined as the map $ \lambda_{\theta}^{-1} : \mathcal{S}(\mathbb{R}_{\theta}^d) \to \mathcal{S}(\mathbb{R}^d) $ given by
\begin{equation}\label{direct-F-transform}
\lambda_{\theta}^{-1}(x) := \widehat{x}, \quad \widehat{x}(s) = \tau_{\theta}(x U_{\theta}(s)^*), \quad s \in \mathbb{R}^d.
\end{equation}
\end{definition}
Moreover, we have the Plancherel's (Parseval's) identity (see, \cite{MSX}) defined by 
\begin{equation}\label{Plancherel}
\|\lambda_{\theta}(f)\|_{L^{2}(\mathbb{R}^{d}_{\theta})}=\|f\|_{L^{2}(\mathbb{R}^{d})},\quad f\in L^{2}(\mathbb{R}^{d}).
\end{equation}
\begin{definition}\label{F-multiplier}
Let $ g\in\mathcal{S}(\mathbb{R}^d).$ Then the
noncommutative Fourier multiplier $ g(D)$ on $ \mathcal{S}(\mathbb{R}_{\theta}^d)$ with the symbol $g$ is defined by
\begin{equation}\label{Fourier-multiplier}
g(D)(x) = \int_{\mathbb{R}^d} g(s) \widehat{x}(s) U_{\theta}(s) \, ds.
\end{equation}
\end{definition}

The following theorem provides the $ L^p $-$ L^q $ boundedness of the Fourier multiplier on the noncommutative Euclidean space.

\begin{theorem}\label{Mainthm_1}\cite[Theorem 4.3]{Tulenov1}\label{MainTh-1} (H\"ormander Multiplier Theorem) 
Let $ 1 < p \leq 2 \leq q < \infty $. If $ g : \mathbb{R}^d \to \mathbb{C} $ is a measurable function such that
\begin{equation}\label{Symbol_condition}
\sup_{t > 0} t \left(\int\limits_{|g(\xi)| \geq t} d\xi\right)^{\frac{1}{p} - \frac{1}{q}}<\infty,
\end{equation} 
then the Fourier multiplier defined by \eqref{Fourier-multiplier} extends to a bounded map from $ L^p(\mathbb{R}^d_\theta) $ to $ L^q(\mathbb{R}^d_\theta) $. Moreover, we have
\begin{equation}\label{H-multiplier theorem}
\|g(D)\|_{L^p(\mathbb{R}^d_\theta) \rightarrow L^q(\mathbb{R}^d_\theta)} \lesssim \sup_{t > 0} t \left(\int\limits_{|g(\xi)| \geq t} d\xi\right)^{\frac{1}{p} - \frac{1}{q}}.
\end{equation}
\end{theorem}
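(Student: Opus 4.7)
The plan is to first establish a strong-$L^r$ version of the multiplier bound by combining the Plancherel identity with noncommutative Hausdorff--Young, and then upgrade to the weak-$L^{r,\infty}$ assumption via a Marcinkiewicz-style real interpolation in the symbol variable.

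I would begin by deriving noncommutative Hausdorff--Young. The Bochner definition \eqref{def-integration} gives the trivial bound $\|\lambda_\theta(f)\|_{L^\infty(\mathbb{R}^d_\theta)}\leq \|f\|_{L^1(\mathbb{R}^d)}$, while Plancherel's identity \eqref{Plancherel} supplies the $L^2$ isometry; Riesz--Thorin complex interpolation then produces $\|\lambda_\theta(f)\|_{L^q(\mathbb{R}^d_\theta)}\leq \|f\|_{L^{q'}(\mathbb{R}^d)}$ for $2\leq q\leq \infty$, together with the dual estimate $\|\widehat{x}\|_{L^{p'}(\mathbb{R}^d)}\leq \|x\|_{L^p(\mathbb{R}^d_\theta)}$ for $1\leq p\leq 2$. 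Applied to the identity $g(D)(x)=\lambda_\theta(g\widehat{x})$, combined with the classical H\"older inequality with $\frac{1}{q'}=\frac{1}{r}+\frac{1}{p'}$ (equivalently $\frac{1}{r}=\frac{1}{p}-\frac{1}{q}$), this delivers the strong-$L^r$ multiplier bound
\begin{equation*}
\|g(D)(x)\|_{L^q(\mathbb{R}^d_\theta)}\leq \|g\widehat{x}\|_{L^{q'}(\mathbb{R}^d)}\leq \|g\|_{L^r(\mathbb{R}^d)}\,\|\widehat{x}\|_{L^{p'}(\mathbb{R}^d)}\leq \|g\|_{L^r(\mathbb{R}^d)}\,\|x\|_{L^p(\mathbb{R}^d_\theta)}.
\end{equation*}

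To relax the symbol condition from $L^r$ to $L^{r,\infty}$, I would invoke a real-interpolation argument in the symbol. The strong bound above actually holds for every admissible pair $(p_j,q_j)$ with $\frac{1}{r_j}=\frac{1}{p_j}-\frac{1}{q_j}$ in the range $1<p_j\leq 2\leq q_j<\infty$, $j=0,1$. Choosing two such pairs straddling the target $(p,q)$ with $r_0<r<r_1$ makes the bilinear assignment $(g,x)\mapsto g(D)(x)$ bounded from $L^{r_j}(\mathbb{R}^d)\times L^{p_j}(\mathbb{R}^d_\theta)$ into $L^{q_j}(\mathbb{R}^d_\theta)$. Real interpolation of the noncommutative $L^p$-spaces (in the Pisier--Xu sense) with the exponent tuple $(r_0,r_1;p_0,p_1;q_0,q_1)$ then yields a bound at the intermediate index, with the interpolated symbol space being $(L^{r_0},L^{r_1})_{\vartheta,\infty}=L^{r,\infty}(\mathbb{R}^d)$, whose quasinorm coincides by the layer-cake identity with the quantity in \eqref{Symbol_condition}.

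The principal obstacle is carrying out this real interpolation cleanly: varying $r$ shifts the source and target exponents simultaneously, so interpolation must be invoked on three spaces at once, and one has to verify that the interpolation functor commutes correctly with the noncommutative $L^p$-construction based on the trace $\tau_\theta$. This is standard from the general theory of interpolation for semifinite von Neumann algebras, but requires careful bookkeeping, especially since the decomposition \eqref{direct-sum} means $L^p(\mathbb{R}^d_\theta)$ is neither purely commutative nor purely Schatten-class. As an alternative that avoids the weak-space interpolation altogether, one can run Stein's analytic-family method on $g_z = g|g|^{z-1}$, suitably normalized, together with Hirschman's three-lines lemma; this route reduces the argument to verifying the strong $L^{r_0}$ and $L^{r_1}$ bounds on the boundary lines $\mathrm{Re}(z)=0,1$ and a uniform-in-imaginary-part control supplied by the Plancherel/Hausdorff--Young step above.
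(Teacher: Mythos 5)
This theorem is imported from \cite{Tulenov1} and is not proved in the present paper, so there is no in-text proof to compare against; I will therefore assess your argument against the proof strategy used in \cite{Tulenov1} and its templates \cite{AR, KR, CKRT}. Your first step is correct and standard: the trivial $L^1(\mathbb{R}^d_\theta)\to L^\infty(\mathbb{R}^d)$ bound $|\widehat{x}(s)|=|\tau_\theta(xU_\theta(s)^*)|\le\|x\|_{L^1(\mathbb{R}^d_\theta)}$, the Bochner bound $\|\lambda_\theta(f)\|_{\mathbb{R}^d_\theta}\le\|f\|_{L^1(\mathbb{R}^d)}$, and noncommutative Riesz--Thorin together give the two Hausdorff--Young inequalities you state; combined with the factorization $g(D)x=\lambda_\theta(g\widehat{x})$ and H\"older, this yields the strong bound $\|g(D)\|_{L^p\to L^q}\le\|g\|_{L^r(\mathbb{R}^d)}$ with $1/r=1/p-1/q$.

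The upgrade from $L^r$ to $L^{r,\infty}$ is where the argument breaks down. The bilinear real-interpolation route fails an index constraint: the standard bilinear $K$-method theorem sends $(A_0,A_1)_{\vartheta,s_1}\times(B_0,B_1)_{\vartheta,s_2}$ into $(C_0,C_1)_{\vartheta,s_3}$ only when $1/s_1+1/s_2\ge 1+1/s_3$. You need $s_1=\infty$ (to produce $L^{r,\infty}$ in the symbol slot) and $s_2=p$ (to keep $L^p$ in the datum slot), forcing $1/s_3\le 1/p-1\le 0$, i.e.\ the conclusion lands at best in a weak $L^{q,\infty}$ and, for $p>1$, cannot even reach there. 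The analytic-family alternative on $g_z=g|g|^{z-1}$ has a parallel problem: if $g\in L^{r,\infty}\setminus L^r$, then $|g|^{r/r_j}\notin L^{r_j}$ either, so the boundary operators are still only weak-type in the symbol and Hirschman's lemma gives you nothing new. The actual proof in \cite{Tulenov1}, following the Akylzhanov--Ruzhansky scheme, inserts an extra ingredient you are missing: a noncommutative Paley inequality $\bigl(\int|\widehat{x}(\xi)|^p\varphi(\xi)^{2-p}\,d\xi\bigr)^{1/p}\lesssim\|\varphi\|_{L^{1,\infty}}^{(2-p)/p}\|x\|_{L^p(\mathbb{R}^d_\theta)}$, itself obtained by Marcinkiewicz interpolation of the sublinear map $x\mapsto\widehat{x}/\varphi$ with respect to the measure $\varphi^2\,d\xi$. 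Interpolating Paley with Hausdorff--Young yields a Hausdorff--Young--Paley inequality, and choosing $\varphi=|g|^r$ then gives exactly the weak condition \eqref{Symbol_condition}. Without some version of this weighted estimate (equivalently, without H\"ormander's original level-set decomposition of the symbol combined with an optimization over the cut height), the passage from strong to weak symbol control does not go through.
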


\section{Applications of $L^p$-$L^q$ boundedness of Fourier multipliers to PDEs on the noncommutative Euclidean space}\label{section-3}

\subsection{$\mathcal{L}$-heat-wave-Schr\"odinger type equations}
\label{sec:main}
Let $0<\alpha<2$ and $1 < p \leq 2 \leq q < \infty.$ In this subsection,  we obtain the $L^p$-$L^q$ norm estimates for the solutions of heat, wave and Schr\"odinger type equations generated by the Fourier multiplier
 $\mathcal{L}$ from $L^{p}(\mathbb{R}^d_{\theta})$ to  $L^{q}(\mathbb{R}^d_{\theta})$  defined by 
\begin{equation}\label{Def-operator}
\mathcal{L}(x):=\int\limits_{\mathbb{R}^d}\sigma(\xi)\widehat{x}(\xi)U_{\theta}(\xi)d\xi 
\end{equation}
with  the symbol $\sigma>0$ satisfying the condition
\begin{equation}\label{Def-M}
M_t:=\sup\limits_{0<\rho<1}\rho\left(\Vol\{\xi\in\mathbb{R}^d:  \sigma(\xi) \leq t^{-\alpha}(\rho^{-1}-1)\}\right)^{\frac{1}{p}-\frac{1}{q}}<\infty, 
\end{equation}
for $t>0.$ 
Here and below, ``$\text{Vol}$" means the volume of a set in $\mathbb{R}^d.$

Let us define the Caputo fractional operator (see, \cite[Chapter 2]{Kilbas}) in time as follows
\begin{equation*} 
  \left(^cD_{t}^{\alpha}f \right)\left( s \right)=\left(I^{\left[ \alpha  \right]-\alpha } \partial^{\left[ \alpha  \right]}f\right)(s), \quad s>0,
  \end{equation*}
where $[\alpha]$ denotes the smallest integer greater or equal than $\alpha>0$ and the  Riemann-Liouville fractional integral $I^{\alpha }f$ of order $\alpha > 0$ (see, \cite[Chapter 2]{Kilbas}) is defined by 
\begin{equation*} 
  \left( I^{\alpha }f \right)\left( s \right)=\frac{1}{ \Gamma\left( \alpha  \right)}\int\limits_{0}^{s} (s-t)^{\alpha-1} f\left( t \right){{d}t}, \quad s>0,  
\end{equation*}
where  $f$ is a continuous function on the interval $[0, s]$.  

\begin{remark} Let $\beta \in \mathbb{R}$ and $ 0<\alpha<2.$   For further investigation, we will use the two-parameter Mittag-Leffler function (see, \cite[Section 1.8]{Kilbas}) frequently
\begin{eqnarray*} 
E_{\alpha, \beta}(z) = \sum_{k=0}^{+\infty} \frac{z^k}{\Gamma(\alpha k + \beta)}, \quad z\in \mathbb{C}.  
\end{eqnarray*}
{\color{red} For the  case  $0<\alpha<2,$ we have the following estimate (see \cite[subsection 2.1]{Podlubny1999}) 
\begin{eqnarray}\label{ML-estimate-2}
|E_{\alpha,\beta}(z)| \leq \frac{C}{1+|z|}, \quad  z\in\mathbb{C},\quad \nu\leq |\arg(z)|\leq \pi,
\end{eqnarray}}
where $\nu$ is a real number such that  $\frac{\pi\alpha}{2}\leq\nu\leq\min\{\pi, \pi\alpha\}$ and $C$ is a positive constant. Note that, $E_{\alpha,1}(\cdot)$ is denoted briefly by $E_{\alpha}(\cdot),$ which is the usual Mittag-Leffler function.
\end{remark}
 For a Banach space $X$ and $0<T\leq \infty$, we denote by $C([0, T), X)$ the Banach space of continuous $X$-valued functions on the interval $[0, T)$ with the norm 
$$
\|f\|_{C([0, T),X)} = \sup\limits_{0 \leq s < T} \|f(s)\|_X.
$$

 \subsubsection{$\mathcal{L}$-heat type equations} We consider the following heat type equation
\begin{equation}\label{Main-equation-1}
\left(^cD_{t}^{\alpha}u \right)(t)+ \mathcal{L}u(t) = 0, \quad t > 0, \quad 0 < \alpha \leq 1, 
\end{equation}
with the initial data
\begin{equation}\label{Initial-date-1}
u(0) = u_0\in L^p(\mathbb{R}^d_{\theta}).   
\end{equation}
The next result gives the $L^p $-$L^q$ estimates for the solution of problem \eqref{Main-equation-1}-\eqref{Initial-date-1}.

\begin{theorem}\label{MainTh-2}
Let $0 < \alpha \leq 1$ and $1 < p \leq 2 \leq q < \infty.$  Let $\mathcal{L}$ be an operator defined by \eqref{Def-operator} with the symbol $\sigma>0$ satisfying condition \eqref{Def-M}.
If $u_0 \in L^p(\mathbb{R}^d_{\theta})$, then there exists a solution $u \in C([0, \infty); L^q(\mathbb{R}^d_{\theta}))$ for the $\mathcal{L}$-heat type equation \eqref{Main-equation-1}-\eqref{Initial-date-1}, represented by
$$
u(t)=E_{\alpha}(-t^\alpha \mathcal{L}) u_0, \quad t > 0,  
$$
where the propagator is defined as
\begin{equation}\label{def_E} 
E_{\alpha}(-t^\alpha \mathcal{L})u_0:=   \int\limits_{\mathbb{R}^{d}}E_{\alpha}(-t^{\alpha}\sigma(\xi) )\widehat{u}_0(\xi)U_{\theta}(\xi)d\xi, \,\ 
E_{\alpha}(-t^\alpha \mathcal{L})u_0:=   \int\limits_{\mathbb{R}^{d}}E_{\alpha}(-t^{\alpha}\sigma(\xi) )\widehat{u}_0(\xi)U_{\theta}(\xi)d\xi.
\end{equation}
Moreover, for all $t > 0$ we have the following estimate
\begin{eqnarray}\label{H-time-decay-rate-1}
\|u(t)\|_{L^q(\mathbb{R}^d_{\theta})} \lesssim M_t \|u_0\|_{L^p(\mathbb{R}^d_{\theta})}.
\end{eqnarray}
In particular, if 
\begin{eqnarray}\label{H-sigms-condition}
\sigma(\xi) \gtrsim  |\xi|^\lambda \quad\text{as} \quad |\xi|\to \infty, \quad \text{for some } \lambda> 0,
\end{eqnarray}
then we get the following time decay rate  for all $t > 0,$
\begin{eqnarray}\label{H-time-decay-rate-2}
\|u(t)\|_{L^q(\mathbb{R}^d_{\theta})} \leq C_{\alpha,\lambda,p,q} t^{-\frac{d\alpha}{\lambda}(\frac{1}{p}-\frac{1}{q})} \|u_0\|_{L^p(\mathbb{R}^d_{\theta})}, \quad \lambda\geq d\left(\frac{1}{p}-\frac{1}{q}\right).
\end{eqnarray}
\end{theorem}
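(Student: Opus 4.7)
The plan is to first verify that the Mittag-Leffler propagator solves the Cauchy problem, and then to invoke Theorem \ref{MainTh-1} to obtain the multiplier bound, with the symbol condition following from the Mittag-Leffler decay estimate \eqref{ML-estimate-2}.

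\textbf{Verification of the solution.} I would begin by recalling the classical identity $({}^cD_t^{\alpha} E_\alpha(-t^\alpha\lambda))(s) = -\lambda E_\alpha(-s^\alpha \lambda)$ for $0<\alpha\leq 1$ and $\lambda\geq 0$. Applying ${}^cD_t^{\alpha}$ under the integral in \eqref{def_E} and using that identity with $\lambda = \sigma(\xi)$ yields $({}^cD_t^{\alpha} u)(t) = -\mathcal{L}u(t)$. The initial condition $u(0) = u_0$ follows from $E_\alpha(0) = 1$ together with the Fourier inversion formula on $\mathbb{R}^d_\theta$.

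\textbf{Multiplier bound.} For fixed $t > 0$, the solution $u(t)$ is, by \eqref{def_E} and Definition \ref{F-multiplier}, the Fourier multiplier with symbol $g_t(\xi) := E_\alpha(-t^\alpha \sigma(\xi))$ acting on $u_0$. Since $\sigma > 0$, the argument $-t^\alpha\sigma(\xi)$ is real and nonpositive, so $|\arg(-t^\alpha\sigma(\xi))| = \pi$ and \eqref{ML-estimate-2} yields
$$|g_t(\xi)| \leq \frac{C}{1+t^\alpha \sigma(\xi)}.$$
Hence $|g_t(\xi)| \geq s$ forces $\sigma(\xi) \leq t^{-\alpha}(Cs^{-1}-1)$, which for $s \in (0,C)$ corresponds, under the change of variable $\rho := s/C \in (0,1)$, to the sublevel set appearing in \eqref{Def-M}. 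The regime $s \geq C$ contributes nothing since $|g_t|\leq C$. Consequently,
$$\sup_{s>0} s\,\bigl(\Vol\{|g_t(\xi)|\geq s\}\bigr)^{\frac{1}{p}-\frac{1}{q}} \lesssim M_t,$$
and Theorem \ref{MainTh-1} yields \eqref{H-time-decay-rate-1}. Continuity $u \in C([0,\infty); L^q(\mathbb{R}^d_\theta))$ will follow by the same multiplier bound applied to symbol differences $g_t - g_{t'}$ together with dominated convergence in the volume condition.

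\textbf{Explicit decay rate.} Under \eqref{H-sigms-condition}, for large $R$ the sublevel set $\{\sigma(\xi) \leq R\}$ is contained in a ball of radius $\lesssim R^{1/\lambda}$, so its volume is $\lesssim R^{d/\lambda}$ (the bounded low-frequency region is absorbed into constants by splitting the volume into a bounded part and a tail). Substituting $R = t^{-\alpha}(\rho^{-1}-1)$ and setting $\gamma := \tfrac{d}{\lambda}\bigl(\tfrac{1}{p}-\tfrac{1}{q}\bigr)$ gives
$$M_t \lesssim t^{-\alpha\gamma}\,\sup_{0<\rho<1} \rho^{1-\gamma}(1-\rho)^{\gamma}.$$
The supremum is finite if and only if $1-\gamma \geq 0$, which is exactly the hypothesis $\lambda \geq d\bigl(\tfrac{1}{p}-\tfrac{1}{q}\bigr)$, yielding \eqref{H-time-decay-rate-2}. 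The step I expect to require the most care is pinning down this sharp algebraic coincidence between the threshold $\gamma \leq 1$ and the stated condition on $\lambda$, together with a clean reduction from the asymptotic lower bound on $\sigma$ to a uniform one controlling the whole sublevel set.
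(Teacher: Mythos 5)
Your proposal is correct and follows essentially the same route as the paper: pass to the Fourier side, solve the resulting fractional ODE by the Mittag-Leffler propagator, bound the symbol $E_\alpha(-t^\alpha\sigma(\cdot))$ in weak-$L^r$ with $1/r=1/p-1/q$ via the decay estimate $|E_{\alpha,\beta}(z)|\lesssim(1+|z|)^{-1}$, invoke the H\"ormander-type multiplier theorem, and then estimate $M_t$ under $\sigma\gtrsim|\xi|^\lambda$. The only differences are cosmetic: the paper cites \cite[Example 4.9]{Kilbas} where you invoke the Caputo--Mittag-Leffler eigenfunction identity directly, and in the decay computation the paper simply drops the $-1$ in $\rho^{-1}-1$ (bounding by $\rho^{-1}$ and getting $\sup_{0<\rho<1}\rho^{1-\gamma}$), whereas you retain the $(1-\rho)^\gamma$ factor, which is marginally sharper but yields the same finiteness threshold $\gamma\le 1$, i.e.\ $\lambda\ge d(1/p-1/q)$. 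Your remark about splitting off the bounded low-frequency region is also a sensible clarification of a point the paper leaves implicit; neither argument, incidentally, does more than assert the claimed continuity in $t$.
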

\begin{proof}
Let $\xi \in \mathbb{R}^d$, and denote by $\widehat{u}(t, \xi)$ the Fourier transform of $u(t)$ at $\xi.$   Thus, it follows from equations \eqref{Main-equation-1} and \eqref{Initial-date-1} that
$$ 
\begin{cases}
 \left(^cD_{t}^{\alpha}\widehat{u} \right)(t, \xi) + \sigma(\xi)\widehat{u}(t, \xi) = 0, & t > 0,\\
\widehat{u}(0, \xi) = \widehat{u}_0(\xi).
\end{cases}
$$
Here, we get a system of ordinary differential equations. Thus,   by  \cite[Example 4.9]{Kilbas}  we arrive at
$$
\widehat{u}(t, \xi) = E_{\alpha}(-t^{\alpha}\sigma(\xi)) \widehat{u}_0(\xi).    
$$
Now, we apply the inverse Fourier transform to $\widehat{u}(t,\xi)$ and get the explicit solution as follows
$$
u(t) =\int\limits_{\mathbb{R}^{d}}\widehat{u}(t, \xi)U_{\theta}(\xi)d\xi =\int\limits_{\mathbb{R}^{d}}E_{\alpha}(- t^{\alpha}\sigma(\xi)) \widehat{u}_0(\xi)U_{\theta}(\xi)d\xi =E_{\alpha}(-t^{\alpha}\mathcal{L}) u_0,  
$$
for $0<\alpha\le1.$    Let us prove that the  operator $E_{\alpha}(-t^{\alpha}\mathcal{L})$ is a Fourier multiplier from $L^p(\mathbb{R}^d_{\theta})$ to $L^q(\mathbb{R}^d_{\theta}).$ In order to show that, we first need to  show $E_{\alpha}(-t^{\alpha}\sigma(\cdot) )\in L^{r, \infty} (\mathbb{R}^d),$ $\frac{1}{r}=\frac{1}{p}-\frac{1}{q}.$ It follows from \eqref{ML-estimate-2} and  \eqref{Def-M}  that
\begin{eqnarray}\label{H-estimate}
\|E_{\alpha}(-t^{\alpha}\sigma(\cdot))\|_{L^{r, \infty} (\mathbb{R}^d)}&=&\sup\limits_{\rho>0}\rho\left(\int\limits_{|E_{\alpha}(-t^{\alpha}\sigma(\xi))|\geq \rho}d\xi\right)^{\frac{1}{p}-\frac{1}{q}}\nonumber\\&=& \sup\limits_{ \rho>0}\rho\left(\Vol\{\xi\in\mathbb{R}^d: |E_{\alpha}(-t^{\alpha}\sigma(\xi))|\geq \rho\}\right)^{\frac{1}{p}-\frac{1}{q}}\nonumber\\&\overset{\eqref{ML-estimate-2}}{\leq}& \sup\limits_{ \rho>0}\rho\left(\Vol\{\xi\in\mathbb{R}^d: \frac{1}{1+t^\alpha\sigma(\xi)}\geq \rho\}\right)^{\frac{1}{p}-\frac{1}{q}}\\ &=& \sup\limits_{0<\rho<1}\rho\left(\Vol\{\xi\in\mathbb{R}^d: \sigma(\xi)\leq t^{-\alpha}(\rho^{-1}-1)\}\right)^{\frac{1}{p}-\frac{1}{q}}\nonumber\\
&\overset{\eqref{Def-M}}{=}&M_t<\infty,\quad t>0\nonumber.
\end{eqnarray}
Therefore, by Theorem \ref{MainTh-1},
$E_{\alpha}(-t^{\alpha}\mathcal{L})$ is a Fourier multiplier from $L^p(\mathbb{R}^d_{\theta})$ to $L^q(\mathbb{R}^d_{\theta})$ with the symbol $E_{\alpha}(-t^{\alpha}\sigma(\cdot) ),$ and consequently,  we have
\begin{equation}\label{E1-inequality}
\|u(t)\|_{L^q(\mathbb{R}^d_{\theta})}=\|E_{\alpha}(-t^{\alpha}\mathcal{L}) u_0\|_{L^q(\mathbb{R}^d_{\theta})}\overset{\eqref{H-estimate}}{\lesssim}  
 M_t \|u_0\|_{L^p(\mathbb{R}^d_{\theta})}<\infty, \quad   t>0, 
\end{equation}
which means that  $u \in C([0, \infty); L^q(\mathbb{R}^d_{\theta}))$ and \eqref{H-time-decay-rate-1}   holds. This proves the first part of the theorem.
Now, let us prove the second part. Indeed,
for any $t>0$ by \eqref{H-sigms-condition} and \eqref{H-estimate} we have
\begin{eqnarray}\label{EL-inequality-3} 
M_t
&\overset{\eqref{H-sigms-condition}\eqref{H-estimate}}{\lesssim}&\sup\limits_{0<\rho<1}\rho\left(\Vol\{\xi\in\mathbb{R}^d: |\xi|^\lambda\leq t^{-\alpha}(\rho^{-1}-1)\}\right)^{\frac{1}{p}-\frac{1}{q}}\nonumber\\
&\le&\sup\limits_{0<\rho<1}\rho\left(\Vol\{\xi\in\mathbb{R}^d: |\xi|\leq t^{-\frac{\alpha}{\lambda}} \rho^{-\frac{1}{\lambda}} \}\right)^{\frac{1}{p}-\frac{1}{q}} \\
&\lesssim&t^{-\frac{d\alpha}{\lambda}(\frac{1}{p}-\frac{1}{q})}\sup\limits_{0<\rho<1}\rho^{1-\frac{d}{\lambda}(\frac{1}{p}-\frac{1}{q})}\nonumber\\
&\lesssim&t^{-\frac{d\alpha}{\lambda}(\frac{1}{p}-\frac{1}{q})}\nonumber, \quad  \lambda\geq d\left(\frac{1}{p}-\frac{1}{q}\right).
\end{eqnarray}
Combining \eqref{E1-inequality} and \eqref{EL-inequality-3} we obtain the time decay rate \eqref{H-time-decay-rate-2}. 
This completes the proof.
\end{proof} 
 
\subsubsection{$\mathcal{L}$-Schr\"odinger type equation}  
In this section we consider the $\mathcal{L}$-Schr\"odinger type equation
\begin{equation}\label{Main-equation-2}
i\left(^cD_{t}^{\alpha}u \right)(t) + \mathcal{L}u(t) = 0, \quad t > 0, \quad 0 < \alpha \le 1, 
\end{equation}
with the initial data
\begin{equation}\label{Initial-date-2}
u(0) = u_0 \in L^p(\mathbb{R}^d_{\theta}),   
\end{equation}
where   the fractional  derivative of Caputo type $^cD_{t}^{\alpha}$ is of order $\alpha > 0.$  
\begin{theorem}\label{MainTh-3}
Let $0 < \alpha \leq 1$ and $1 < p \leq 2 \leq q < \infty.$    Let  $\mathcal{L}$ be an operator defined by \eqref{Def-operator} with the symbol $\sigma>0$ satisfying condition \eqref{Def-M}. If $u_0 \in L^p(\mathbb{R}^d_{\theta})$, then there exists a solution $u \in C([0, \infty); L^q(\mathbb{R}^d_{\theta}))$ for the $\mathcal{L}$-Schr\"odinger type equation \eqref{Main-equation-2}-\eqref{Initial-date-2}, represented by
$$
u(t) = E_{\alpha}(it^\alpha \mathcal{L}) u_0, \quad t > 0, 
$$
where 
$$E_{\alpha}(it^{\alpha}\mathcal{L}) u_0:=   \int\limits_{\mathbb{R}^{d}}E_{\alpha}(it^{\alpha}\sigma(\xi) )\widehat{u}_0(\xi)U_{\theta}(\xi)d\xi.$$
Moreover, for all $t > 0$ we have the following estimate
\begin{equation}\label{S-time-decay-rate-1}
\|u(t)\|_{L^q(\mathbb{R}^d_{\theta})} \lesssim M_t \|u_0\|_{L^p(\mathbb{R}^d_{\theta})}.
\end{equation}
In particular, if 
\begin{equation*}
\sigma(\xi) \gtrsim  |\xi|^\lambda \quad\text{as} \quad |\xi|\to \infty, \quad \text{for some } \lambda> 0,
\end{equation*}
then we get the following time decay rate  for all $t > 0,$
\begin{equation}\label{S-time-decay-rate-2}
\|u(t)\|_{L^q(\mathbb{R}^d_{\theta})} \leq C_{\alpha,\lambda,p,q} t^{-\frac{d\alpha}{\lambda}(\frac{1}{p} - \frac{1}{q})} \|u_0\|_{L^p(\mathbb{R}^d_{\theta})}, \quad \lambda\geq d\left(\frac{1}{p}-\frac{1}{q}\right).
\end{equation}
\end{theorem}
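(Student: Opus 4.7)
The proof will follow the same template as Theorem \ref{MainTh-2}, with the crucial observation being that the Mittag-Leffler estimate \eqref{ML-estimate-2} still applies when the argument is purely imaginary. The plan is to Fourier-transform equation \eqref{Main-equation-2} in the noncommutative sense to reduce it (for each frequency $\xi$) to the scalar fractional Cauchy problem
\begin{equation*}
i\,\bigl(^cD_{t}^{\alpha}\widehat{u}\bigr)(t,\xi)+\sigma(\xi)\widehat{u}(t,\xi)=0,\qquad \widehat{u}(0,\xi)=\widehat{u}_{0}(\xi),
\end{equation*}
which is equivalent to $\bigl(^cD_{t}^{\alpha}\widehat{u}\bigr)(t,\xi)=i\sigma(\xi)\widehat{u}(t,\xi)$. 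By \cite[Example 4.9]{Kilbas} the solution is $\widehat{u}(t,\xi)=E_{\alpha}(it^{\alpha}\sigma(\xi))\widehat{u}_{0}(\xi)$, and inverting the Fourier transform gives the claimed representation $u(t)=E_{\alpha}(it^{\alpha}\mathcal{L})u_{0}$.

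The decisive step is then to show that the symbol $\xi \mapsto E_{\alpha}(it^{\alpha}\sigma(\xi))$ lies in $L^{r,\infty}(\mathbb{R}^{d})$ with $\frac{1}{r}=\frac{1}{p}-\frac{1}{q}$. Here the only substantive difference from the heat case is verifying the hypothesis of \eqref{ML-estimate-2}: since $\sigma>0$ and $t>0$, the argument $z=it^{\alpha}\sigma(\xi)$ satisfies $|\arg(z)|=\pi/2$, and the admissible range in \eqref{ML-estimate-2} is $\nu\le|\arg(z)|\le\pi$ with $\pi\alpha/2\le\nu\le\min\{\pi,\pi\alpha\}$; choosing $\nu=\pi\alpha/2$ we need $\pi/2\ge\pi\alpha/2$, which holds precisely because $0<\alpha\le 1$. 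Hence $|E_{\alpha}(it^{\alpha}\sigma(\xi))|\lesssim (1+t^{\alpha}\sigma(\xi))^{-1}$, and the weak-$L^{r}$ quasinorm calculation proceeds verbatim as in \eqref{H-estimate}:
\begin{equation*}
\|E_{\alpha}(it^{\alpha}\sigma(\cdot))\|_{L^{r,\infty}(\mathbb{R}^{d})}
\;\lesssim\;\sup_{0<\rho<1}\rho\bigl(\Vol\{\xi:\sigma(\xi)\le t^{-\alpha}(\rho^{-1}-1)\}\bigr)^{\frac{1}{p}-\frac{1}{q}}=M_{t}<\infty.
\end{equation*}

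With this bound in hand, Theorem \ref{MainTh-1} (the noncommutative Hörmander multiplier theorem) immediately gives that $E_{\alpha}(it^{\alpha}\mathcal{L})$ is bounded from $L^{p}(\mathbb{R}^{d}_{\theta})$ to $L^{q}(\mathbb{R}^{d}_{\theta})$ with operator norm $\lesssim M_{t}$, which yields \eqref{S-time-decay-rate-1} and, in particular, the continuity statement $u\in C([0,\infty);L^{q}(\mathbb{R}^{d}_{\theta}))$. The refined polynomial decay \eqref{S-time-decay-rate-2} under the lower bound $\sigma(\xi)\gtrsim|\xi|^{\lambda}$ at infinity is then obtained by a direct volume estimate of the sublevel set $\{|\xi|\le t^{-\alpha/\lambda}\rho^{-1/\lambda}\}$, exactly as in \eqref{EL-inequality-3}, and the exponent constraint $\lambda\ge d(\frac{1}{p}-\frac{1}{q})$ is needed to keep $\sup_{0<\rho<1}\rho^{\,1-\frac{d}{\lambda}(\frac{1}{p}-\frac{1}{q})}$ finite.

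The main (very mild) obstacle is therefore the verification that the sector condition in \eqref{ML-estimate-2} is compatible with a purely imaginary argument; once that is noted, the proof reduces word-for-word to the argument for Theorem \ref{MainTh-2}, with $-t^{\alpha}\mathcal{L}$ replaced by $it^{\alpha}\mathcal{L}$ throughout.
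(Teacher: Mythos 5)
Your proof takes essentially the same route as the paper's: reduce to the scalar fractional ODE via the Fourier transform, solve with the Mittag-Leffler function, verify the weak-$L^r$ symbol bound using \eqref{ML-estimate-2}, apply Theorem \ref{MainTh-1}, and obtain the time-decay rate via the volume estimate \eqref{EL-inequality-3}. Your explicit verification that $|\arg(it^\alpha\sigma(\xi))| = \pi/2$ lies in the admissible sector of \eqref{ML-estimate-2} (since $\pi\alpha/2 \le \pi/2$ for $0 < \alpha \le 1$) is the one substantive addition: it makes precise the single point where the Schr\"odinger case genuinely differs from the heat case, and the paper passes over it silently.

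One caution, which affects the paper equally: the standard Mittag-Leffler bound (Podlubny, Theorem 1.6) requires the \emph{strict} inequality $\nu > \pi\alpha/2$; for $\alpha = 1$ this forces $\nu > \pi/2$, which excludes the imaginary axis entirely. And indeed $E_{1}(it^\alpha\sigma(\xi)) = e^{it\sigma(\xi)}$ has modulus $1$ for all $t$, so the decay in $|z|$ asserted by \eqref{ML-estimate-2} genuinely fails at $\alpha=1$ on the ray $\arg z = \pi/2$. Your argument and the paper's both rely on the non-strict inequality as written in \eqref{ML-estimate-2}; as a result the Schr\"odinger conclusion should be restricted to $0 < \alpha < 1$, or the $\alpha = 1$ case handled by a separate estimate.
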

\begin{proof} 
Observe that the solution to the problem   \eqref{Main-equation-2}-\eqref{Initial-date-2} is expressed as
 \begin{equation}\label{E-operator-2} 
u(t)=   \int\limits_{\mathbb{R}^{d}}E_{\alpha}(it^{\alpha}\sigma(\xi) )\widehat{u}_0(\xi)U_{\theta}(\xi)d\xi, \quad  0<\alpha\le1. 
\end{equation}
Now we check that the right hand side of \eqref{E-operator-2} is a Fourier multiplier from $L^p(\mathbb{R}^d_{\theta})$ to $L^q(\mathbb{R}^d_{\theta}).$ 
It follows from  \eqref{Def-M} and \eqref{ML-estimate-2} that     
\begin{eqnarray}\label{S-estimate}
\|E_{\alpha}(it^{\alpha}\sigma(\cdot))\|_{L^{r, \infty} (\mathbb{R}^d)}&=&\sup\limits_{\rho>0}\rho\left(\int\limits_{|E_{\alpha}(it^{\alpha}\sigma(\xi))|\geq \rho}d\xi\right)^{\frac{1}{p}-\frac{1}{q}}\nonumber\\&=& \sup\limits_{ \rho>0}\rho\left(\Vol\{\xi\in\mathbb{R}^d: |E_{\alpha}(it^{\alpha}\sigma(\xi))|\geq \rho\}\right)^{\frac{1}{p}-\frac{1}{q}}\nonumber\\&\overset{\eqref{ML-estimate-2}}{\leq}& \sup\limits_{ \rho>0}\rho\left(\Vol\{\xi\in\mathbb{R}^d: \frac{1}{1+t^\alpha\sigma(\xi)}\geq \rho\}\right)^{\frac{1}{p}-\frac{1}{q}}\\ &=& \sup\limits_{0<\rho<1}\rho\left(\Vol\{\xi\in\mathbb{R}^d: \sigma(\xi)\leq t^{-\alpha}(\rho^{-1}-1)\}\right)^{\frac{1}{p}-\frac{1}{q}}\nonumber\\
&\overset{\eqref{Def-M}}{=}&M_t<\infty,\quad t>0\nonumber.
\end{eqnarray}
Thus, by Theorem \ref{MainTh-1}, the  operator $E_{\alpha}(it^{\alpha}\mathcal{L})$ is a Fourier multiplier from $L^p(\mathbb{R}^d_{\theta})$ to $L^q(\mathbb{R}^d_{\theta})$ with the symbol $E_{\alpha}(it^{\alpha}\sigma(\cdot) ).$  It follows from  \eqref{S-estimate}   that
\begin{equation}\label{ES-inequality-1}
\|u(t)\|_{L^q(\mathbb{R}^d_{\theta})} = \|E_{\alpha}(it^{\alpha}\mathcal{L}) u_0\|_{L^q(\mathbb{R}^d_{\theta})} \overset{\eqref{S-estimate}}{\lesssim} M_t \|u_0\|_{L^p(\mathbb{R}^d_{\theta})}, \quad t>0.
\end{equation}
Consequently, we obtain that $u \in C([0, \infty); L^q(\mathbb{R}^d_{\theta}))$ and \eqref{S-time-decay-rate-1}. Hence,  by \eqref{EL-inequality-3} and \eqref{ES-inequality-1} we obtain the time decay rate \eqref{S-time-decay-rate-2} for all $t > 0.$   This completes the proof. 
\end{proof}
 
\subsubsection{$\mathcal{L}$-wave type equation} 
In this subsection, we examine the solution to the following equation
\begin{eqnarray}\label{Main-equation-3}
\left(^cD_{t}^{\alpha}u \right)(t)+ \mathcal{L}u(t) = 0, \quad t > 0,\quad 1 < \alpha < 2,    
\end{eqnarray}
with initial data 
\begin{eqnarray}\label{Initial-date-3}
u(0)= u_0 \in L^p(\mathbb{R}^d_{\theta})
\end{eqnarray}
and 
\begin{eqnarray}\label{Initial-date-4}
\partial_t u(0) = u_1 \in L^p(\mathbb{R}^d_{\theta}),
\end{eqnarray}
where   the fractional  derivative of Caputo type $^cD_{t}^{\alpha}$ is of order $\alpha > 0.$  
\begin{theorem}\label{wave-thm-4} Let $1<\alpha<2$ and $1 < p \leq 2 \leq q < \infty.$  Let $\mathcal{L}$ be an operator defined by \eqref{Def-operator} with a symbol $\sigma>0$ satisfying condition \eqref{Def-M}. If $u_0, u_1 \in L^p(\mathbb{R}^d_{\theta}),$ then there exists a solution $u\in C([0,\infty);L^q(\mathbb{R}^d_{\theta}))$ for the $\mathcal{L}$-wave type equation \eqref{Main-equation-3}-\eqref{Initial-date-4} which is given by
$$
u(t) = E_{\alpha}(-t^{\alpha}\mathcal{L}) u_0+tE_{\alpha,2}(-t^{\alpha}\mathcal{L}) u_1, \quad t > 0, 
$$
where  the operator  $E_{\alpha}(-t^{\alpha}\mathcal{L}) u_0$ defined by \eqref{def_E} and 
$$
E_{\alpha,2}(-\xi^{\alpha}\mathcal{L})u_1:=\int\limits_{\mathbb{R}^{d}}E_{\alpha,2}(- t^{\alpha}\sigma(\xi))\widehat{u}_1(\xi)U_{\theta}(\xi)d\xi, \quad t > 0.  
$$
Moreover, for any $t>0$
we have the following estimate
\begin{equation}\label{W-time-decay-rate-1}
\|u(t)\|_{L^q(\mathbb{R}^d_{\theta})} \lesssim M_t \left(\|u_0\|_{L^p(\mathbb{R}^d_{\theta})} + t\|u_1\|_{L^p(\mathbb{R}^d_{\theta})}\right), \quad t>0.
\end{equation}
In particular, if  
\begin{equation*}
\sigma(\xi)\gtrsim  |\xi|^\lambda \quad\text{as} \quad |\xi|\to \infty, \quad \text{for some } \lambda> 0,
\end{equation*}
then we get the following time decay rate for $t>0$
\begin{equation}\label{W-time-decay-rate-2}
\|u(t)\|_{L^q(\mathbb{R}^d_{\theta})} \leq C_{\alpha,\lambda,p,q} t^{-\frac{d\alpha}{\lambda} \left(\frac{1}{p} - \frac{1}{q}\right)} \left(\|u_0\|_{L^p(\mathbb{R}^d_{\theta})} + t\|u_1\|_{L^p(\mathbb{R}^d_{\theta})}\right), \quad \lambda\geq d\left(\frac{1}{p}-\frac{1}{q}\right).
\end{equation}
\end{theorem}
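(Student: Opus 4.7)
The plan is to mirror the arguments used in Theorems \ref{MainTh-2} and \ref{MainTh-3}, since the Caputo equation of order $1 < \alpha < 2$ with two initial data decomposes naturally into two single-parameter problems. First I would apply the Fourier transform $\lambda_{\theta}^{-1}$ in the space variable; using the definition \eqref{Def-operator} of $\mathcal{L}$ this reduces \eqref{Main-equation-3}--\eqref{Initial-date-4} to the family of scalar fractional ODEs
\begin{equation*}
\bigl({}^{c}D_{t}^{\alpha}\widehat{u}\bigr)(t,\xi)+\sigma(\xi)\widehat{u}(t,\xi)=0,\qquad \widehat{u}(0,\xi)=\widehat{u}_{0}(\xi),\quad \partial_{t}\widehat{u}(0,\xi)=\widehat{u}_{1}(\xi).
\end{equation*}
By the standard representation for Caputo equations (see \cite[Example 4.10]{Kilbas} for $1<\alpha<2$) the solution is
\begin{equation*}
\widehat{u}(t,\xi)=E_{\alpha}(-t^{\alpha}\sigma(\xi))\widehat{u}_{0}(\xi)+t\,E_{\alpha,2}(-t^{\alpha}\sigma(\xi))\widehat{u}_{1}(\xi),
\end{equation*}
and inverting the Fourier transform produces exactly the claimed formula $u(t)=E_{\alpha}(-t^{\alpha}\mathcal{L})u_{0}+tE_{\alpha,2}(-t^{\alpha}\mathcal{L})u_{1}$.

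Next I would verify that each of the two symbols $E_{\alpha}(-t^{\alpha}\sigma(\cdot))$ and $E_{\alpha,2}(-t^{\alpha}\sigma(\cdot))$ lies in $L^{r,\infty}(\mathbb{R}^{d})$ with $\frac{1}{r}=\frac{1}{p}-\frac{1}{q}$, with quasinorm controlled by $M_{t}$. Crucially, the Mittag-Leffler bound \eqref{ML-estimate-2} is stated for the two-parameter family and applies uniformly in $\beta\in\{1,2\}$ because $t^{\alpha}\sigma(\xi)\in(0,\infty)$ so that $|\arg(-t^{\alpha}\sigma(\xi))|=\pi$. Hence for both $\beta=1$ and $\beta=2$ we get $|E_{\alpha,\beta}(-t^{\alpha}\sigma(\xi))|\lesssim (1+t^{\alpha}\sigma(\xi))^{-1}$, so the computation performed line-by-line in \eqref{H-estimate} of Theorem \ref{MainTh-2} transfers verbatim and yields
\begin{equation*}
\|E_{\alpha}(-t^{\alpha}\sigma(\cdot))\|_{L^{r,\infty}(\mathbb{R}^{d})}+\|E_{\alpha,2}(-t^{\alpha}\sigma(\cdot))\|_{L^{r,\infty}(\mathbb{R}^{d})}\lesssim M_{t}.
\end{equation*}

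With these two weak-$L^{r}$ bounds in hand, the Hörmander-type multiplier theorem (Theorem \ref{MainTh-1}) shows that both $E_{\alpha}(-t^{\alpha}\mathcal{L})$ and $E_{\alpha,2}(-t^{\alpha}\mathcal{L})$ extend to bounded operators from $L^{p}(\mathbb{R}^{d}_{\theta})$ to $L^{q}(\mathbb{R}^{d}_{\theta})$, each with operator norm $\lesssim M_{t}$. Applying the triangle inequality in $L^{q}(\mathbb{R}^{d}_{\theta})$ to the representation of $u(t)$ gives
\begin{equation*}
\|u(t)\|_{L^{q}(\mathbb{R}^{d}_{\theta})}\lesssim M_{t}\bigl(\|u_{0}\|_{L^{p}(\mathbb{R}^{d}_{\theta})}+t\|u_{1}\|_{L^{p}(\mathbb{R}^{d}_{\theta})}\bigr),
\end{equation*}
which is \eqref{W-time-decay-rate-1}, and continuity of the right-hand side in $t$ then gives $u\in C([0,\infty);L^{q}(\mathbb{R}^{d}_{\theta}))$. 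Finally, under the polynomial lower bound $\sigma(\xi)\gtrsim|\xi|^{\lambda}$ at infinity, the change of variables already carried out in \eqref{EL-inequality-3} shows $M_{t}\lesssim t^{-\frac{d\alpha}{\lambda}(\frac{1}{p}-\frac{1}{q})}$ provided $\lambda\geq d(\frac{1}{p}-\frac{1}{q})$, and substituting this into \eqref{W-time-decay-rate-1} produces \eqref{W-time-decay-rate-2}.

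I do not anticipate a serious obstacle: the only point that must be checked, rather than copied, is that estimate \eqref{ML-estimate-2} continues to hold for the second-parameter value $\beta=2$ with $1<\alpha<2$, which is exactly the regime where \cite[Section 2.1]{Podlubny1999} provides the uniform decay $|E_{\alpha,\beta}(z)|\lesssim (1+|z|)^{-1}$ on the sector containing the negative real axis. Once that is in place, the proof is a direct repetition of the heat and Schrödinger arguments, with an added triangle inequality accommodating the second initial datum.
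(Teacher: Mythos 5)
Your proposal is correct and follows essentially the same route as the paper: Fourier transform to a family of scalar Caputo ODEs, invoke \cite[Example 4.10]{Kilbas} for the two-term Mittag-Leffler representation, verify that both symbols $E_{\alpha}(-t^{\alpha}\sigma(\cdot))$ and $E_{\alpha,2}(-t^{\alpha}\sigma(\cdot))$ have $L^{r,\infty}$ quasinorm $\lesssim M_t$ via \eqref{ML-estimate-2}, and then apply Theorem \ref{MainTh-1} together with the triangle inequality and the computation of \eqref{EL-inequality-3}. You also correctly flag the only point requiring care, namely that the decay bound for $E_{\alpha,\beta}$ holds for $\beta=2$ as well as $\beta=1$, which is exactly the step the paper relies on in \eqref{W-estimate}.
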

\begin{proof} Let $\xi\in \mathbb{R}^d$, and denote by $\widehat{u}(t, \xi)$ the  Fourier transform of $u(t)$ with respect to the variable $\xi$. Hence, it follows from equations \eqref{Main-equation-3}-\eqref{Initial-date-3} that
\begin{equation}\label{ODE}
\begin{cases}
\left(^cD_{t}^{\alpha}\widehat{u} \right)(t, \xi) + \sigma(\xi)\widehat{u}(t, \xi) = 0, & t > 0,\\
\widehat{u}(0, \xi) = \widehat{u}_0(\xi),\\
\partial_t \widehat{u}(0, \xi) = \widehat{u}_1(\xi). 
\end{cases}
\end{equation}
Here, we also obtain a system of ordinary differential equations as the previous subsection. Thus, by \cite[Example 4.10]{Kilbas}, we arrive at
$$
\widehat{u}(t, \xi) = E_{\alpha,1}(-t^{\alpha}\sigma(\xi)) \widehat{u}_0(\xi)+tE_{\alpha,2}(-t^{\alpha}\sigma(\xi)) \widehat{u}_1(\xi).
$$
We now perform the inverse Fourier transform on  $\mathbb{R}^d_{\theta}$ to derive the explicit solution to the problem as follows:
\begin{eqnarray}\label{W-solution} 
u(t) &=&  \int\limits_{\mathbb{R}^{d}}\widehat{u}(t, \xi)U_{\theta}(\xi)d\xi\nonumber\\
&=&\int\limits_{\mathbb{R}^{d}}E_{\alpha,1}(- t^{\alpha}\sigma(\xi)) \widehat{u}_0(\xi)U_{\theta}(\xi)d\xi+t\int\limits_{\mathbb{R}^{d}}E_{\alpha,2}(- t^{\alpha}\sigma(\xi))\widehat{u}_1(\xi)U_{\theta}(\xi)d\xi\\
&=&E_{\alpha}(-t^{\alpha}\mathcal{L}) u_0+tE_{\alpha,2}(-t^{\alpha}\mathcal{L}) u_1. \nonumber
\end{eqnarray}
Now, let us show  that $E_{\alpha,2}(-t^{\alpha}\mathcal{L})$ is Fourier multiplier from $L^p(\mathbb{R}^d_{\theta})$ to $L^q(\mathbb{R}^d_{\theta}).$
 Using  \eqref{Def-M} and \eqref{ML-estimate-2} we have    
\begin{eqnarray}\label{W-estimate}
\|E_{\alpha,2}(-t^{\alpha}\sigma(\cdot))\|_{L^{r, \infty} (\mathbb{R}^d)}&=&\sup\limits_{\rho>0}\rho\left(\int\limits_{|E_{\alpha,2}(-t^{\alpha}\sigma(\xi))|\geq \rho}d\xi\right)^{\frac{1}{p}-\frac{1}{q}}\nonumber\\&=& \sup\limits_{ \rho>0}\rho\left(\Vol\{\xi\in\mathbb{R}^d: |E_{\alpha,2}(-t^{\alpha}\sigma(\xi))|\geq \rho\}\right)^{\frac{1}{p}-\frac{1}{q}}\nonumber\\&\overset{\eqref{ML-estimate-2}}{\leq}& \sup\limits_{ \rho>0}\rho\left(\Vol\{\xi\in\mathbb{R}^d: \frac{1}{1+t^\alpha\sigma(\xi)}\geq \rho\}\right)^{\frac{1}{p}-\frac{1}{q}}\\ &=& \sup\limits_{0<\rho<1}\rho\left(\Vol\{\xi\in\mathbb{R}^d: \sigma(\xi)\leq t^{-\alpha}(\rho^{-1}-1)\}\right)^{\frac{1}{p}-\frac{1}{q}}\nonumber\\
&\overset{\eqref{Def-M}}{=}&M_t<\infty,\quad t>0.\nonumber
\end{eqnarray}
Hence, by Theorem \ref{MainTh-1}  the  operator $E_{\alpha,2}(-t^{\alpha}\mathcal{L})$ is a Fourier multiplier from $L^p(\mathbb{R}^d_{\theta})$ to $L^q(\mathbb{R}^d_{\theta})$ with the symbol $E_{\alpha,2}(-t^{\alpha}\sigma(\cdot) ).$  Next, it follows from  \eqref{H-estimate} and \eqref{W-solution}-\eqref{W-estimate}    that  
\begin{eqnarray} \label{W-inequality}
\|u(t)\|_{L^q(\mathbb{R}^d_{\theta})}&\overset{\eqref{W-solution}}{\leq}&\|E_{\alpha}(-t^{\alpha}\mathcal{L}) u_0 \|_{L^q(\mathbb{R}^d_{\theta})} + t\|E_{\alpha,2}(-t^{\alpha}\mathcal{L}) u_1\|_{L^q(\mathbb{R}^d_{\theta})} \nonumber\\
 &\overset{\eqref{H-estimate}\eqref{W-estimate}}{\lesssim}& M_t \left(\|u_0\|_{L^p(\mathbb{R}^d_{\theta})} +  t  \|u_1\|_{L^p(\mathbb{R}^d_{\theta})}\right), \quad t>0.  
\end{eqnarray}
Therefore, the solution $u\in C([0,\infty);L^q(\mathbb{R}^d_{\theta}))$ and \eqref{W-time-decay-rate-1}. By  \eqref{EL-inequality-3}  and \eqref{W-inequality} we have the time decay rate \eqref{W-time-decay-rate-2} for all $t > 0.$  This completes the proof. 
\end{proof}
 
\subsection{Example} 
We introduce the group of translations $\{T_s\}_{s\in\mathbb{R}^d},$ where $T_s$ is presented as the unique $\ast$-automorphism of $L^{\infty}(\mathbb{R}^d_\theta)$ that operates on $U_{\theta}(t)$ as follows:
\begin{equation}\label{def-translations-1}
T_s(U_{\theta}(t))=e^{i(t,s)}U_{\theta}(t), \quad  
t,s \in\mathbb{R}^d, 
\end{equation}
where $(\cdot,\cdot)$ denotes the standard inner product in $\mathbb{R}^d.$  Then partial derivations $\partial^{\theta}_j,  j = 1, \dots, d,$ are defined on smooth elements $x$ by 
$$
\partial^{\theta}_j(x) = \frac{d}{ds_{j}}T_s(x)|_{s=0}. 
$$
By \eqref{def-translations-1} and \eqref{def-integration}, it is easily verified that 
$$
\partial^{\theta}_j(x)\overset{\eqref{def-integration}}{=}\partial^{\theta}_j\lambda_{\theta}(f)\overset{\eqref{def-translations-1}}{=}\lambda_{\theta}(it_{j}f(t)), \quad  j=1,\cdots,d, \,\  f\in \mathcal{S}(\mathbb{R}^d),
$$
for $x=\lambda_{\theta}(f).$  
For a multi-index $\alpha=(\alpha_1,...,\alpha_d),$ we define 
$$
\partial^{\alpha}_{\theta}=(\partial^{\theta}_{1})^{\alpha_{1}}\dots(\partial^{\theta}_{d})^{\alpha_{d}}, 
$$
and the gradient $\nabla_{\theta}$ associated with $\mathbb{R}^{d}_{\theta}$ is the operator
$$
\nabla_{\theta}=(\partial^{\theta}_{1},  \dots,  \partial^{\theta}_{d}).  
$$
Moreover, the Laplace operator $\Delta_{\theta}$ is defined as
\begin{equation}\label{laplacian}
\Delta_{\theta} = (\partial_1^{\theta})^2   + \cdots +(\partial_d^{\theta})^2. 
\end  {equation}
Note that $-\Delta_{\theta}$ is a positive operator on $L^2(\mathbb{R}^{d}_\theta)$ with the spectrum $\{ |\xi|^2  : \xi \in \mathbb{R}^d\}$ (see \cite{MSX} and \cite{Mc}). {\color{red} Let $\mathcal{L}= (-\Delta_{\theta})^\frac{s}{2}$ with the symbol  $\sigma(\xi):=|\xi|^s$ and $s>0.$    Then a unique solution of the $\mathcal{L}$-heat type equation \eqref{Main-equation-1}-\eqref{Initial-date-1}, represented by
$$
u(t)=E_{\alpha}(-t^\alpha(-\Delta_{\theta})^\frac{s}{2}) u_0, \quad t > 0.  
$$ 
Moreover,
\begin{eqnarray}\label{E-condition} 
\|E_{\alpha}(-t^{\alpha}\sigma(\cdot))\|_{L^{r, \infty} (\mathbb{R}^d)}&{\le}&\sup\limits_{0<\rho<1}\rho\left(\Vol\{\xi\in\mathbb{R}^d: |\xi|^s\leq t^{-\alpha}(\rho^{-1}-1)\}\right)^{\frac{1}{p}-\frac{1}{q}}\nonumber\\
&\le&\sup\limits_{0<\rho<1}\rho\left(\Vol\{\xi\in\mathbb{R}^d: |\xi|\leq t^{-\frac{\alpha}{s}} \rho^{-\frac{1}{s}} \}\right)^{\frac{1}{p}-\frac{1}{q}}\\
&\lesssim&t^{-\frac{d\alpha}{s}(\frac{1}{p}-\frac{1}{q})}\sup\limits_{0<\rho<1}\rho^{1-\frac{d}{s}(\frac{1}{p}-\frac{1}{q})}\nonumber\\
&\lesssim&t^{-\frac{d\alpha}{s}(\frac{1}{p}-\frac{1}{q})}\nonumber,  
\end{eqnarray}
whenever $p\neq q$ such that $2\leq d\leq \frac{s}{\left(\frac{1}{p}-\frac{1}{q}\right)}.$ Therefore, it satisfies conditions of Theorems \ref{MainTh-1} and \eqref{E-condition}  we get the following time decay rate  for all $t > 0,$
\begin{eqnarray}\label{S-time-decay-rate}
\|u(t)\|_{L^q(\mathbb{R}^d_{\theta})} \leq C_{\alpha,p,q} t^{-\frac{d\alpha}{s}(\frac{1}{p} - \frac{1}{q})} \|u_0\|_{L^p(\mathbb{R}^d_{\theta})}, \quad 2\leq d\leq \frac{s}{\left(\frac{1}{p}-\frac{1}{q}\right)}, \,\ p\neq q.
\end{eqnarray} }

\section{Nonlinear PDEs}
In this subsection, we explore various applications of Theorem \ref{MainTh-1} in the context of certain nonlinear PDEs on noncommutative Euclidean spaces. Our main goal is to demonstrate the well-posedness properties of these nonlinear equations in this setting. The core idea is based on the studies in \cite{CKRT} and \cite{KR}, where analogous problems were investigated on smooth manifolds.

\subsection{Nonlinear heat equation}
First, let us consider the following Cauchy problem: 
 \begin{equation}\label{Eq4_1}
 \begin{split}
     & \partial_t u(t)-h(t)|\mathcal{A}u(t)|^{p}=0, \\ 
     & u(0)=u_{0}.
 \end{split}
      \end{equation}
Here, we write $\partial_t$ for differentiation in the time variable $t,$  $h$ is a positive bounded function on $(0,T],$ and $\mathcal{A}$ is a linear bounded operator from $L^{2}(\mathbb{R}^{d}_{\theta})$ to $L^{2p}(\mathbb{R}^{d}_{\theta}),$ $1\leq p<\infty.$ 
 Here, also note that $|\mathcal{A}u(t)|^{p}:=((\mathcal{A}u(t))^*\mathcal{A}u(t))^{p/2}$ and $(\mathcal{A}u(t))^*$ is the adjoint operator of $\mathcal{A}u(t).$

For a Banach space $X$, we denote by $L^\infty((0, T], X)$ the Banach space of essentially bounded $X$-valued functions on the interval $(0, T]$ with the norm 
$$ 
 \|u\|_{L^\infty((0,T]; X)} = \operatorname{ess\,sup}\limits\limits_{t \in (0,T]} \|u(t)\|_{X} < \infty. 
 $$

 Our purpose is to study the well-posedness of the heat equation \eqref{Eq4_1}.
 \begin{definition} We say that the  Cauchy problem \eqref{Eq4_1} admits a solution $u$ if it satisfies 
 \begin{equation}\label{Eq4_2}
u(t)=u_{0}+\int\limits_{0}^{t}h(s)|\mathcal{A}u(s)|^{p}ds
 \end{equation}
in the space $L^{\infty}((0,T]; L^{2}(\mathbb{R}^{d}_{\theta}))$ for every $T<\infty$. 

We say that the Cauchy problem \eqref{Eq4_1} admits a local solution $u$ if it satisfies equation \eqref{Eq4_2} in space $L^{\infty}((0,T^{*}]; L^{2}(\mathbb{R}^{d}_{\theta}))$ for some $T^{*}>0$.
 \end{definition}
 \begin{theorem}\label{Mainthm_2} Let $p\geq2$ integer number and $u_0\in L^{2}(\mathbb{R}^{d}_{\theta}).$  Suppose that $\mathcal{A}$ is a linear bounded operator from $L^{2}(\mathbb{R}^{d}_{\theta})$ to $L^{2p}(\mathbb{R}^{d}_{\theta}).$  
Then the Cauchy problem \eqref{Eq4_1} has a local solution in the space $L^{\infty}((0,T^{*}]; L^{2}(\mathbb{R}^{d}_{\theta}))$ for some $T^{*}>0$.
 \end{theorem}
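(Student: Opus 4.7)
The plan is to apply the Banach contraction mapping principle to the integral reformulation \eqref{Eq4_2}. Define the nonlinear operator
$$
(\Psi u)(t) := u_0 + \int_0^t h(s)\,|\mathcal{A}u(s)|^p\,ds, \qquad t \in (0, T^*],
$$
so that fixed points of $\Psi$ are exactly the local solutions. I would choose $T^* > 0$ small enough that $\Psi$ is a strict contraction on the closed ball
$$
B_M := \bigl\{u \in L^\infty((0,T^*]; L^2(\mathbb{R}^d_\theta)) \,:\, \|u\|_{L^\infty((0,T^*]; L^2(\mathbb{R}^d_\theta))} \leq M\bigr\},
$$
with $M := 2\|u_0\|_{L^2(\mathbb{R}^d_\theta)}$; the space $B_M$ is a complete metric space under the natural distance.

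The first step is the self-map property. From the definition of the noncommutative $L^r$-norms via the trace $\tau_\theta$, one has the identity $\||y|^p\|_{L^2(\mathbb{R}^d_\theta)} = \|y\|_{L^{2p}(\mathbb{R}^d_\theta)}^p$ for every $y \in L^{2p}(\mathbb{R}^d_\theta)$. Combined with the boundedness of $\mathcal{A}: L^2(\mathbb{R}^d_\theta) \to L^{2p}(\mathbb{R}^d_\theta)$, this gives
$$
\||\mathcal{A}u(s)|^p\|_{L^2(\mathbb{R}^d_\theta)} \leq \|\mathcal{A}\|^p \|u(s)\|_{L^2(\mathbb{R}^d_\theta)}^p,
$$
so that
$$
\|\Psi(u)(t)\|_{L^2(\mathbb{R}^d_\theta)} \leq \|u_0\|_{L^2(\mathbb{R}^d_\theta)} + T^* \|h\|_{L^\infty(0,T^*]} \|\mathcal{A}\|^p M^p,
$$
which is bounded by $M$ provided $T^* \leq T_1^*$ for some explicit $T_1^* > 0$.

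The second and crucial step is the contraction estimate. The key ingredient is a noncommutative Lipschitz-type bound
$$
\bigl\||f|^p - |g|^p\bigr\|_{L^2(\mathbb{R}^d_\theta)} \lesssim_p \bigl(\|f\|_{L^{2p}(\mathbb{R}^d_\theta)}^{p-1} + \|g\|_{L^{2p}(\mathbb{R}^d_\theta)}^{p-1}\bigr) \|f - g\|_{L^{2p}(\mathbb{R}^d_\theta)}.
$$
For an even integer $p = 2k$ this follows by telescoping
$(f^*f)^k - (g^*g)^k = \sum_{j=0}^{k-1} (f^*f)^{k-1-j}(f^*f - g^*g)(g^*g)^j$,
writing $f^*f - g^*g = (f-g)^*f + g^*(f-g)$, and applying the noncommutative Hölder inequality. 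For odd $p$ one factors $|x|^p = (x^*x)^{(p-1)/2}\,|x|$ and combines the even case with a further Hölder step, or invokes standard Lipschitz estimates for fractional powers of positive operators in noncommutative $L^r$-spaces. Together with the boundedness of $\mathcal{A}$ this yields
$$
\|\Psi(u)(t) - \Psi(v)(t)\|_{L^2(\mathbb{R}^d_\theta)} \lesssim T^*\|h\|_{L^\infty(0,T^*]}\|\mathcal{A}\|^p M^{p-1}\,\|u - v\|_{L^\infty((0,T^*]; L^2(\mathbb{R}^d_\theta))},
$$
so that after shrinking $T^*$ further to some $T^* \leq T_2^*$, $\Psi$ is a strict contraction on $B_M$. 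The Banach fixed point theorem then produces a unique $u \in B_M$ with $\Psi(u) = u$, which is the desired local solution.

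The main obstacle is the noncommutative Lipschitz estimate for $x \mapsto |x|^p$. For even $p$ it reduces to an explicit algebraic manipulation via the telescoping identity, whereas for odd $p \geq 3$ the absence of a polynomial expression for $(x^*x)^{p/2}$ makes the argument slightly delicate and requires either a reduction to the even case or an appeal to known operator-theoretic Lipschitz bounds for fractional powers in noncommutative $L^r$-spaces. Once this estimate is available, taking $T^* := \min(T_1^*, T_2^*)$ closes the fixed-point argument and produces the claimed local solution.
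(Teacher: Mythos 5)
Your proposal is correct and takes essentially the same approach as the paper: both set up a Banach contraction mapping $\Psi=\mathcal{K}$ on a closed ball of $L^{\infty}((0,T^*];L^{2}(\mathbb{R}^{d}_{\theta}))$, use the trace identity $\||y|^p\|_{L^2}=\|y\|_{L^{2p}}^p$ together with boundedness of $\mathcal{A}$ for the self-map estimate, and rely on a noncommutative Lipschitz bound for $x\mapsto|x|^p$ for the contraction step. The only noticeable difference is in how that Lipschitz bound is justified: the paper splits it into the Lipschitz continuity of the absolute value (citing \cite{DDdPS}) composed with a Lipschitz estimate for $t\mapsto t^p$ on positive operators (citing \cite[Sec.~5.3]{RST2}), valid uniformly for all integers $p\geq 2$, whereas you give a self-contained telescoping-plus-H\"older argument for even $p$ and defer odd $p$ to the same type of external references — so in substance you converge on the same ingredients.
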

\begin{proof} First, by integrating equation \eqref{Eq4_1} with respect to $t$, we get
\begin{equation*}
u(t)=u_{0}+\int\limits_{0}^{t}h(s)|\mathcal{A}u(s)|^pds.
\end{equation*}
Let $w(\cdot):=|\mathcal{A}u(\cdot)|^p.$ Then, using \eqref{direct-F-transform} we derive 
\begin{equation}\label{Eq4_3}
\widehat{u}(t,\xi)=\widehat{u}_{0}(\xi)+\int\limits_{0}^{t}h(s)\widehat{w}(s,\xi)ds,\quad \xi\in\mathbb{R}^{d}, 
\end{equation}
where $\widehat{u}(t, \xi)$ and $\widehat{w}(t, \xi)$ denote the Fourier transforms of the operators $u(t)$ and $w(t)$, respectively, with respect to the variable $\xi \in \mathbb{R}^d$. Thus, by the Plancherel identity \eqref{Plancherel} and \eqref{Eq4_3} we obtain the following estimate 
\begin{equation}\label{Eq4_4} 
\|u(t)\|^{2}_{L^{2}(\mathbb{R}^{d}_{\theta})}\overset{\eqref{Plancherel}}{=}\|\widehat{u}(t)\|^{2}_{L^{2}(\mathbb{R}^{d})}\leq c\left(  \|\widehat{u}_{0}\|^{2}_{L^{2}(\mathbb{R}^{d})}  +  \int\limits_{\mathbb{R}^{d}}\left|\int\limits_{0}^{t}h(s)\widehat{w}(s,\xi)ds \right|^{2}d\xi \right). 
\end{equation}
By the Cauchy-Schwarz inequality we have
\begin{eqnarray}\label{Eq4_5} 
\int\limits_{\mathbb{R}^{d}}\left|\int\limits_{0}^{t}h(s)\widehat{w}(s, \xi)ds \right|^{2}d\xi&\leq&\int\limits_{\mathbb{R}^{d}}\int\limits_{0}^{t}\left|h(s)\right|^{2}ds \int\limits_{0}^{t}\left|\widehat{w}(s, \xi)\right|^{2}ds d\xi\nonumber\\
&=&\|h\|^2_{L^2(0,T)} \int\limits_{0}^{t}\|\widehat{w}(s, \xi)\|_{L^{2}(\mathbb{R}^{d})}ds. 
\end{eqnarray}
By combining \eqref{Eq4_4} and \eqref{Eq4_5}, using Plancherel identity \eqref{Plancherel} we obtain
 \begin{eqnarray}\label{Eq4_6} 
     \|u(t)\|^{2}_{L^{2}(\mathbb{R}^{d}_{\theta})}&\overset{\eqref{Eq4_5}}{\leq}&c\left(  \|\widehat{u}_{0}\|^{2}_{L^{2}(\mathbb{R}^{d})}  +  \|h\|^2_{L^2(0,T)} \int\limits_{0}^{t}\|\widehat{w}(s, \xi)\|_{L^{2}(\mathbb{R}^{d})}ds \right)\nonumber\\&\overset{\eqref{Plancherel}}{=}& c \left(  \|u_{0}\|^{2}_{L^{2}(\mathbb{R}^{d}_{\theta})}  +    \|h\|^2_{L^2(0,T)} \int\limits_{0}^{t}\|w(s)\|_{L^{2}(\mathbb{R}^{d}_{\theta})}ds \right) \nonumber\\
     &=& c \left(  \|u_{0}\|^{2}_{L^{2}(\mathbb{R}^{d}_{\theta})}  + \|h\|^2_{L^2(0,T)}\int\limits_{0}^{t}\tau_{\theta}(|\mathcal{A}u(s)|^{2p})ds \right)\\
     &=& c \left( \|u_{0}\|^{2}_{L^{2}(\mathbb{R}^{d}_{\theta})}  + \|h\|^2_{L^2(0,T)}\int\limits_{0}^{t}\|\mathcal{A}u(s)\|^{2p}_{L^{2p}(\mathbb{R}^{d}_{\theta})}ds \right).\nonumber 
 \end{eqnarray}
Since the operator $\mathcal{A}$ is bounded from $L^{2}(\mathbb{R}^{d}_{\theta})$ to $L^{2p}(\mathbb{R}^{d}_{\theta})$, there exit a constant $m>0$ such that 
\begin{equation}\label{Eq4_7}
\|\mathcal{A}u\|_{L^{2p}(\mathbb{R}^{d}_{\theta})}\leq m\|u\|_{L^{2}(\mathbb{R}^{d}_{\theta})}, \quad u\in L^{2}(\mathbb{R}^{d}_{\theta}). 
\end{equation}
Hence,  applying \eqref{Eq4_7}, we can express \eqref{Eq4_6} in the following form  
  \begin{equation}\label{Eq4_8} 
     \|u(t)\|^{2}_{L^{2}(\mathbb{R}^{d}_{\theta})} \overset{\eqref{Eq4_7}}{\leq} c \left(  \|u_{0}\|^{2}_{L^{2}(\mathbb{R}^{d}_{\theta})}  + \|h\|^2_{L^2(0,T)}\int\limits_{0}^{t}\|u(s)\|^{2p}_{L^{2}(\mathbb{R}^{d}_{\theta})}ds \right). 
 \end{equation}
By taking the $L^{\infty}$-norm on both sides of the estimate \eqref{Eq4_8}, one obtain 
 \begin{equation}\label{Eq4_9} 
     \|u\|^{2}_{L^{\infty}((0,T];L^{2}(\mathbb{R}^{d}_{\theta}))}  \overset{\eqref{Eq4_8}}{\leq} c \left(  \|u_{0}\|^{2}_{L^{2}(\mathbb{R}^{d}_{\theta})}  + T \|h\|^2_{L^2(0,T)}\|u\|^{2p}_{L^{\infty}((0,T];L^{2}(\mathbb{R}^{d}_{\theta}))} \right). 
 \end{equation}
 Let us denote the following set by  $\mathcal{X}:$ 
\begin{equation*}
\mathcal{X}:=\{u\in L^{\infty}((0,T];L^{2}(\mathbb{R}^{d}_{\theta})) : \|u\|_{ L^{\infty}((0,T];L^{2}(\mathbb{R}^{d}_{\theta}))}\leq \delta \|u_{0}\|_{L^{2}(\mathbb{R}^{d}_{\theta})}\}
\end{equation*}
for some constant $\delta\geq1$. Next, for $u \in \mathcal{X},$ we define the map
$$
(\mathcal{K} u)(t) = u_{0}+\int\limits_{0}^{t}h(s)|\mathcal{A}u(s)|^{p}ds, \quad 0 < t \leq T.
$$
We are going to prove there exists a unique local solution as a fixed point of $\mathcal{K}$ by the Banach fixed point theorem. First, let us show that $\mathcal{K}$ maps $\mathcal{X}$ into itself.  Indeed, if $u \in \mathcal{X}.$ Then, by \eqref{Eq4_8} we have  
\begin{eqnarray*}
\|(\mathcal{K} u)(t)\|^2_{L^2(\mathbb{R}^d_{\theta})}&\overset{\eqref{Eq4_8}}{\leq}& c \left(  \|u_{0}\|^{2}_{L^{2}(\mathbb{R}^{d}_{\theta})}  + T \|h\|^2_{L^2(0,T)}\|u\|^{2p}_{L^{\infty}((0,T];L^{2}(\mathbb{R}^{d}_{\theta}))} \right)\\
&\leq& c \left(\|u_{0}\|^{2}_{L^{2}(\mathbb{R}^{d}_{\theta}))}+T \|h\|^2_{L^2(0,T)} \delta^{2p}\|u_0\|^{2p}_{ L^{2}(\mathbb{R}^{d}_{\theta})}\right)\\
&\leq& c \left(1+T \|h\|^2_{L^2(0,T)} \delta^{2p}\|u_0\|^{2(p-1)}_{ L^{2}(\mathbb{R}^{d}_{\theta})}\right)\delta\|u_{0}\|^{2}_{L^{2}(\mathbb{R}^{d}_{\theta}))}. 
\end{eqnarray*}
Choose $T > 0$  such  that 
\begin{equation*}
    T\leq T^{*}:=\frac{ \sqrt{c^{2}-1}} {\|h\|_{L^2(0,T)} \delta^{p}\|u_0\|^{p-1}_{ L^{2}(\mathbb{R}^{d}_{\theta})}}.
\end{equation*}
Consequently,  
$$
\|\mathcal{K} (u)\|_{L^{\infty}((0,T];L^{2}(\mathbb{R}^{d}_{\theta}))}\leq \delta \|u_{0}\|_{L^{2}(\mathbb{R}^{d}_{\theta})},
$$
which shows that $\mathcal{K}(u)\in \mathcal{X}.$   Let us now show that $\mathcal{K}$ is a contraction map.  By Theorem 2.2 (i) in \cite{DDdPS}  and the linearity of $\mathcal{A}$  we obtain
\begin{eqnarray}\label{Eq4_10}
\||\mathcal{A}(u)|-|\mathcal{A}(v)|\|_{L^{2p}(\mathbb{R}^d_{\theta})}&\leq&c  \|\mathcal{A}(u)-\mathcal{A}(v))\|_{L^{2p}(\mathbb{R}^d_{\theta})} \nonumber\\&=& c  \|\mathcal{A}(u-v)\|_{L^{2p}(\mathbb{R}^d_{\theta})} \\&\overset{\eqref{Eq4_6}}{\leq}& c m \|u-v\|_{L^{2}(\mathbb{R}^d_{\theta})}.\nonumber  
\end{eqnarray}
It has been shown in \cite[Section 5.3]{RST2} that the operator function $f(t)=t^{p}$ satisfies the following inequality:
\begin{equation}\label{Eq4_11}
\|u^p-v^p\|_{L^2(\mathbb{R}^d_{\theta})}\leq c_3 (\|u\|^{p-1}_{L^{2p}(\mathbb{R}^d_{\theta})}+\|v\|^{p-1}_{L^{2p}(\mathbb{R}^d_{\theta})})\|u-v\|_{L^{2p}(\mathbb{R}^d_{\theta})} 
\end{equation}
 for all positive operators $u,v\in L^{2p}(\mathbb{R}^d_{\theta}).$ Thus, combining  \eqref{Eq4_6}, \eqref{Eq4_10} and \eqref{Eq4_11},  we obtain
\begin{eqnarray}\label{Eq4_12}
\||\mathcal{A}(u)|^p-|\mathcal{A}(v)|^p\|_{L^2(\mathbb{R}^d_{\theta})}&\overset{\eqref{Eq4_11}}{\leq}& c_3 (\|\mathcal{A}(u)\|^{p-1}_{L^{2p}(\mathbb{R}^d_{\theta})}+\|\mathcal{A}(v)\|^{p-1}_{L^{2p}(\mathbb{R}^d_{\theta})})\||\mathcal{A}u|-|\mathcal{A}v|\|_{L^{2p}(\mathbb{R}^d_{\theta})}\nonumber\\
&\overset{\eqref{Eq4_6}}{\leq}& 2c_3m^{p-1} (\|u\|^{p-1}_{L^{2}(\mathbb{R}^d_{\theta})}+\|v\|^{p-1}_{L^{2}(\mathbb{R}^d_{\theta})})\||\mathcal{A}u|-|\mathcal{A}v|\|_{L^{2p}(\mathbb{R}^d_{\theta})}\\
&\overset{\eqref{Eq4_10}}{\leq}& 2c_3c_4m^{p} \delta^{p-1} \|u_0\|^{p-1}_{L^{2}(\mathbb{R}^d_{\theta})} \| u- v\|_{L^{2}(\mathbb{R}^d_{\theta})},\nonumber 
\end{eqnarray}
for $u,v\in \mathcal{X}.$ Let $T_1$ be chosen such that $T_1:=T <\{2c_2c_4m^{p} \delta^{p-1} \|u_0\|^{p-1}_{L^{2}(\mathbb{R}^d_{\theta})}\}^{-1}.$ Then, by \eqref{Eq4_12} we obtain  
  \begin{eqnarray} \label{Eq4_13}
    \|(\mathcal{K}u)(t) - (\mathcal{K}v)(t)\|_{L^2(\mathbb{R}^d_{\theta})} 
    &\leq& \int\limits_{0}^{t} \||\mathcal{A}u(s)|^{p}- |\mathcal{A}v(s)|^{p}\|_{L^2(\mathbb{R}^d_{\theta})}ds \nonumber\\
    &\overset{\eqref{Eq4_12}}{\leq}&2c_2c_4m^{p} \delta^{p-1} \|u_0\|^{p-1}_{L^{2}(\mathbb{R}^d_{\theta})}\int\limits_{0}^{t}  \|u(s) -  v(s)\|_{L^{2}(\mathbb{R}^d_{\theta})}\\
    &\le&T2c_2c_4m^{p} \delta^{p-1} \|u_0\|^{p-1}_{L^{2}(\mathbb{R}^d_{\theta})} \|u-  v\|_{L^{\infty}((0,T];L^{2}(\mathbb{R}^{d}_{\theta}))},\nonumber
\end{eqnarray}
for all $T\leq T_1.$ Hence, taking $L^{\infty}$-norm from the both sides of  \eqref{Eq4_13} with respect to $T\leq \min\{T_0, T_1\},$ for any $u,v\in\mathcal{X}$ we obtain
\begin{eqnarray*}
    \|\mathcal{K}(u) - \mathcal{K}(v)\|_{L^{\infty}((0,T];L^{2}(\mathbb{R}^{d}_{\theta}))} 
    \leq \|u - v\|_{L^{\infty}((0,T];L^{2}(\mathbb{R}^{d}_{\theta}))}, 
\end{eqnarray*}
which shows that $\mathcal{K}$ is contraction on $\mathcal{X}.$ Hence, Banach fixed point theorem implies that $\mathcal{K}$ has a unique
fixed point $u\in  \mathcal{X}$ such that $u(t)=(\mathcal{K} u)(t).$ It thus shows that problem \eqref{Eq4_1} has a unique solution $u\in \mathcal{X}\subset L^{\infty}((0,T]; L^2(\mathbb{R}^d_{\theta}))$ for all $T\leq \min\{T_0, T_1\}.$  

About the uniqueness of the solution, let $u$ and $v$ be two solutions of \eqref{Eq4_1}. We define
$$
h(t) = \|u(t) - v(t)\|_{L^2(\mathbb{R}^d_{\theta})}, \quad  t \geq 0,
$$
and repeating the argument in  \eqref{Eq4_13}, we have
$$
h(t)\overset{\eqref{Eq4_2}}{\leq}\int\limits_{0}^{t} \||\mathcal{A}u(s)|^{p}- |\mathcal{A}v(s)|^{p}\|_{L^2(\mathbb{R}^d_{\theta})}ds \leq c \int\limits_0^t h(s)ds, \quad t \geq 0.
$$
Therefore, by the Gronwall lemma, we obtain that $h(t) \equiv 0$, which implies $u(t) \equiv v(t)$ for all $t \geq 0$.

\end{proof} 
As a result of Theorem \ref{Mainthm_1} and Theorem \ref{Mainthm_2}, we arrive at the following noteworthy conclusion.
 \begin{theorem} Let $p\geq2$ integer number and $u_0\in L^{2}(\mathbb{R}^{d}_{\theta}).$ Suppose that $\mathcal{A}$ is Fourier multiplier such that its symbol $\sigma$ satisfies the condition \eqref{Symbol_condition}. Then the  Cauchy problem \eqref{Eq4_1} has a local solution in the space $L^{\infty}((0,T^{*}]; L^{2}(\mathbb{R}^{d}_{\theta}))$ for some $T^{*}>0$.
 \end{theorem}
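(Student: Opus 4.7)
The plan is to observe that the statement is an immediate corollary of the two preceding theorems: one only needs to verify that the Fourier multiplier $\mathcal{A}$ with symbol $\sigma$ satisfying \eqref{Symbol_condition} is a bounded linear operator from $L^{2}(\mathbb{R}^{d}_{\theta})$ to $L^{2p}(\mathbb{R}^{d}_{\theta})$, after which Theorem \ref{Mainthm_2} applies verbatim.

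First, I would invoke the H\"ormander multiplier theorem (Theorem \ref{Mainthm_1}) with the Lebesgue exponents chosen as $2$ (domain) and $2p$ (codomain). Since $p\geq 2$ is an integer, one has $1<2\leq 2\leq 2p<\infty$, so the admissible range of exponents is respected. Specialized to $(2,2p)$, the symbol condition \eqref{Symbol_condition} reads
$$
\sup_{t>0} t\left(\int_{|\sigma(\xi)|\geq t} d\xi\right)^{\frac{1}{2}-\frac{1}{2p}}<\infty,
$$
which is precisely the hypothesis on $\sigma$. Hence Theorem \ref{Mainthm_1} yields a bounded extension $\mathcal{A}\colon L^{2}(\mathbb{R}^{d}_{\theta})\to L^{2p}(\mathbb{R}^{d}_{\theta})$, together with a quantitative bound for its operator norm in terms of the supremum above.

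Second, with this boundedness established, every hypothesis of Theorem \ref{Mainthm_2} is met: $p\geq 2$ is an integer, $u_{0}\in L^{2}(\mathbb{R}^{d}_{\theta})$, and $\mathcal{A}$ is a linear bounded operator from $L^{2}(\mathbb{R}^{d}_{\theta})$ to $L^{2p}(\mathbb{R}^{d}_{\theta})$. Applying Theorem \ref{Mainthm_2} directly produces a local solution $u\in L^{\infty}((0,T^{*}];L^{2}(\mathbb{R}^{d}_{\theta}))$ of the Cauchy problem \eqref{Eq4_1} for some $T^{*}>0$, completing the proof.

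There is essentially no obstacle; the only mild subtlety is the notational clash of the letter $p$, which simultaneously denotes the nonlinearity exponent in \eqref{Eq4_1} and (implicitly) one of the Lebesgue indices in Theorem \ref{Mainthm_1}. One should therefore be careful when citing Theorem \ref{Mainthm_1}: its exponents $(p,q)$ are to be read as $(2,2p)$ in the present application, so that the exponent difference appearing in \eqref{Symbol_condition} is $\tfrac{1}{2}-\tfrac{1}{2p}$, matching the hypothesis assumed on $\sigma$.
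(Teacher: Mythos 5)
Your proposal is correct and takes precisely the same route as the paper: invoke Theorem \ref{Mainthm_1} with exponents $(2,2p)$ to obtain boundedness of $\mathcal{A}\colon L^{2}(\mathbb{R}^{d}_{\theta})\to L^{2p}(\mathbb{R}^{d}_{\theta})$, then apply Theorem \ref{Mainthm_2}. Your write-up is in fact slightly more careful than the paper's (which contains a stray $\varphi$ where it means $\sigma$), and your remark about the notational clash in $p$ is a reasonable caution, but the argument is the same.
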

\begin{proof} Given that $\varphi \in L^{r,\infty}(\mathbb{R}^{d})$ with $1/r = 1/2 - 1/2p.$ Thus, Theorem \ref{Mainthm_1} implies that $\mathcal{A}$ is bounded from $L^{2}(\mathbb{R}^{d}_{\theta})$ to $L^{2p}(\mathbb{R}^{d}_{\theta})$. Consequently, the statement follows directly from Theorem \ref{Mainthm_2}.
\end{proof}

\subsection{Nonlinear wave equation}
Now, we examine the Cauchy problem for the nonlinear wave equation in $L^{\infty}((0,T]; L^{2}(\mathbb{R}^{d}_{\theta}))$:
\begin{equation}\label{Eq4_14}
    \begin{split}
        & \partial^{2}_t u(t)-h(t)|\mathcal{A}u(t)|^{p}=0,\\
        & u(0)=u_{0}, \quad \partial_t u(0)=u_{1}.
    \end{split}
\end{equation}
Here, we assume that $u_0, u_1\in L^{2}(\mathbb{R}^{d}_{\theta}).$ We write $\partial_t$ for differentiation in the time variable $t,$ $h$ is a positive bounded function on $(0,T]$ and $\mathcal{A}$ is a linear bounded operator from $L^{2}(\mathbb{R}^{d}_{\theta})$ to $L^{2p}(\mathbb{R}^{d}_{\theta}),$ and $p\geq2$ integer number. 
\begin{definition} We say that the initial value problem \eqref{Eq4_14} admits a global solution $u$ if it satisfies 
\begin{equation}\label{Eq4_15}
        u(t)=u_{0}+tu_{1}+\int\limits_{0}^{t}(t-s)h(s)|\mathcal{A}u(s)|^{p}ds
\end{equation}
in the space $L^{\infty}((0,T]; L^{2}(\mathbb{R}^{d}_{\theta}))$ for every $T>0$.

We say that \eqref{Eq4_14} admits a local solution $u$ if it satisfies the equation \eqref{Eq4_15} in the space $L^{\infty}((0,T^{*}]; L^{2}(\mathbb{R}^{d}_{\theta}))$ for some $T^{*}>0$.
\end{definition}

\begin{theorem}\label{wave-thm} Let $p\geq2$ integer number. Suppose that $\mathcal{A}$ is a linear bounded operator from $L^{2}(\mathbb{R}^{d}_{\theta})$ to $L^{2p}(\mathbb{R}^{d}_{\theta}).$  
\begin{itemize}
    \item [(i)] If $\|h\|_{L^{2}(0,T)}<\infty$ for some $T>0$ and $u_0, u_1 \in L^{2}(\mathbb{R}^{d}_{\theta})$, then the Cauchy problem \eqref{Eq4_14} has a local solution in $L^{\infty}((0,T]; L^{2}(\mathbb{R}^{d}_{\theta}))$.
    \item [(ii)] Suppose that $u_{1}$ is identically equal to zero. Let $\gamma > 3/2$. Moreover, assume that $\|h\|_{L^{2}(0,T)} \leq c T^{-\gamma}$ for every $T > 0$, where $c$ does not depend on $T$. Then, for every $T > 0$, the   Cauchy problem \eqref{Eq4_14} has a solution in the space  $L^{\infty}((0,T]; L^{2}(\mathbb{R}^{d}_{\theta}))$ for all $u_{0}\in L^{2}(\mathbb{R}^{d}_{\theta})$ with a sufficiently small norm.
\end{itemize}
\end{theorem}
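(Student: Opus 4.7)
The plan is to adapt the Banach contraction argument from the proof of Theorem \ref{Mainthm_2} to the second-order Cauchy problem \eqref{Eq4_14}. Integrating twice in $t$ yields the Duhamel representation \eqref{Eq4_15}, so I set
\[
(\mathcal{K}u)(t) := u_0 + tu_1 + \int_0^t (t-s) h(s) |\mathcal{A}u(s)|^p\,ds
\]
and look for a fixed point on a closed ball in $L^{\infty}((0,T]; L^2(\mathbb{R}^d_\theta))$. Applying Plancherel's identity \eqref{Plancherel} on the Fourier side of $\mathcal{K}u$ and then Cauchy--Schwarz in the time variable, while bounding $(t-s)\le T$, would produce
\[
\|(\mathcal{K}u)(t)\|_{L^2(\mathbb{R}^d_\theta)}^2 \lesssim \|u_0\|_{L^2(\mathbb{R}^d_\theta)}^2 + T^2 \|u_1\|_{L^2(\mathbb{R}^d_\theta)}^2 + T^2 \|h\|_{L^2(0,T)}^2 \int_0^t \|\mathcal{A}u(s)\|_{L^{2p}(\mathbb{R}^d_\theta)}^{2p}\,ds.
\]
Combining this with the boundedness $\|\mathcal{A}u\|_{L^{2p}}\le m\|u\|_{L^2}$ as in \eqref{Eq4_7} and taking the $L^{\infty}$-norm in $t\in (0,T]$ yields the key a priori bound
\[
\|\mathcal{K}u\|_{L^\infty((0,T]; L^2)}^2 \lesssim \|u_0\|_{L^2}^2 + T^2 \|u_1\|_{L^2}^2 + T^3 \|h\|_{L^2(0,T)}^2 \, m^{2p} \|u\|_{L^\infty((0,T]; L^2)}^{2p}.
\]

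For part (i), I would work on the ball $\mathcal{X}_\delta := \{u \in L^\infty((0,T]; L^2(\mathbb{R}^d_\theta)) : \|u\|_{L^\infty} \le \delta(\|u_0\|_{L^2} + T\|u_1\|_{L^2})\}$ with $\delta$ large enough to absorb the implicit constant above; then shrinking $T$ to a first threshold $T_0^* > 0$ ensures $\mathcal{K}(\mathcal{X}_\delta) \subseteq \mathcal{X}_\delta$. For the contraction step, I would use the Lipschitz continuity of the absolute value in the noncommutative $L^{2p}$-norm from \cite{DDdPS}, exactly as in \eqref{Eq4_10}, together with the operator power inequality \eqref{Eq4_11} of \cite{RST2}; combining these with the same Plancherel and Cauchy--Schwarz maneuver applied to $\mathcal{K}u - \mathcal{K}v$ yields
\[
\|\mathcal{K}u - \mathcal{K}v\|_{L^\infty((0,T]; L^2)} \le C T^{3/2} \|h\|_{L^2(0,T)} m^p \delta^{p-1} (\|u_0\|_{L^2} + T\|u_1\|_{L^2})^{p-1} \|u - v\|_{L^\infty((0,T]; L^2)}.
\]
Further shrinking $T$ to $T_1^* \le T_0^*$ makes this Lipschitz constant strictly below one, and the Banach fixed-point theorem then produces the required unique local solution on $(0, T^*]$ with $T^* := \min\{T_0^*, T_1^*\}$.

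For part (ii), I set $u_1 \equiv 0$ and substitute the hypothesis $\|h\|_{L^2(0,T)} \le c T^{-\gamma}$ into both previous estimates; the self-mapping condition becomes $\|\mathcal{K}u\|_{L^\infty}^2 \lesssim \|u_0\|_{L^2}^2 + C T^{3-2\gamma} m^{2p} \|u\|_{L^\infty}^{2p}$ and the contraction constant is at most $C T^{3/2-\gamma} m^p \delta^{p-1} \|u_0\|_{L^2}^{p-1}$. Since $\gamma > 3/2$, both exponents $3-2\gamma$ and $3/2 - \gamma$ are negative, so these two factors remain controlled. For any prescribed $T > 0$, imposing a smallness condition of the form $\|u_0\|_{L^2}^{p-1} \le \eta(T)$ for a suitable $\eta(T) > 0$ simultaneously closes the self-mapping step and forces the Lipschitz constant below one, so Banach's fixed-point theorem delivers a solution in $L^\infty((0,T]; L^2(\mathbb{R}^d_\theta))$.

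The principal obstacle relative to the heat-equation proof of Theorem \ref{Mainthm_2} is the kernel weight $(t-s)$ in the Duhamel formula \eqref{Eq4_15}, absent from \eqref{Eq4_2}. Absorbing it via Cauchy--Schwarz costs one additional factor of $T$ beyond the heat case, producing $T^3 \|h\|_{L^2(0,T)}^2$ in the self-mapping estimate and $T^{3/2} \|h\|_{L^2(0,T)}$ in the contraction estimate; this is precisely what dictates the threshold $\gamma > 3/2$ in part (ii), namely the minimal polynomial decay rate of $\|h\|_{L^2(0,T)}$ at which both factors can be kept under control uniformly.
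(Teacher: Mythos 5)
Your proposal is correct and follows essentially the same route as the paper: integrate twice to get the Duhamel formula \eqref{Eq4_15}, apply Plancherel and Cauchy--Schwarz to obtain the a priori bound \eqref{Eq4_20} with the $T^3\|h\|^2_{L^2(0,T)}$ factor, set up a ball of radius proportional to $\|u_0\|_{L^2}+T\|u_1\|_{L^2}$, show self-mapping by shrinking $T$, prove contraction via the Lipschitz estimate for the absolute value and the operator power inequality, and in part (ii) substitute $\|h\|_{L^2(0,T)}\le cT^{-\gamma}$ to obtain a $T$-dependent smallness condition on $\|u_0\|$. Your contraction estimate, which retains the $(t-s)h(s)$ weight and produces the explicit factor $T^{3/2}\|h\|_{L^2(0,T)}$, is in fact slightly more careful than the paper's display \eqref{Eq5_11}, which drops that weight.

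One cautionary remark: your sentence ``Since $\gamma>3/2$, both exponents $3-2\gamma$ and $3/2-\gamma$ are negative, so these two factors remain controlled'' is misleading as phrased, because $T^{3-2\gamma}$ and $T^{3/2-\gamma}$ blow up as $T\to 0^+$ when the exponent is negative. What actually closes the argument, as your final sentences indicate, is that for each fixed $T>0$ these are finite constants and the smallness threshold $\eta(T)$ on $\|u_0\|_{L^2}$ is allowed to depend on $T$; the sign of the exponent governs the direction in which $\eta(T)$ degenerates. The paper makes this quantitative by inserting a $T^{\gamma_0}$ scaling into the ball $\mathcal{X}_{c_2}$ and choosing $0<\gamma_0<(2\gamma-3)/p$ so that $\tilde\gamma=3-2\gamma+\gamma_0 p<0$, which produces the explicit constraint $c^p\|u_0\|^{2p-2}_{L^2(\mathbb{R}^d_\theta)}\le c_2 T^{-\tilde\gamma+\gamma_0}$. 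Your formulation reaches the same conclusion without introducing $\gamma_0$, at the cost of leaving the precise form of $\eta(T)$ implicit.
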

\begin{proof} First, we prove (i). By integrating equation \eqref{Eq4_14} twice with respect to  $t,$ one obtains
\begin{equation*}
     u(t)=u_{0}+tu_{1}+\int\limits_{0}^{t}(t-s)h(s)|\mathcal{A}u(s)|^{p}ds.
\end{equation*}
We denote $w(\cdot):=|\mathcal{A}u(\cdot)|^{p}.$ Then, using \eqref{direct-F-transform} we derive 
\begin{equation}\label{Eq4_16}
\widehat{u}(t)=\widehat{u}_{0}(\xi)+t\widehat{u}_{1}(\xi)+\int\limits_{0}^{t}(t-s)h(s)\widehat{w}(s,\xi)ds, \quad \xi \in \mathbb{R}^d,
\end{equation}
where the functions $\widehat{u}(t, \xi)$ and $\widehat{w}(t, \xi)$  are the Fourier transform of $u(t)$ with respect to the variable $\xi \in \mathbb{R}^d.$ Hence, applying the Plancherel identity \eqref{Plancherel}  with \eqref{Eq4_16}, we obtain the following estimate
\begin{eqnarray}\label{Eq4_17}  
 \|u(t)\|_{L^{2}(\mathbb{R}^{d}_{\theta})}^{2}&=&\|\widehat{u}(t)\|_{L^{2}(\mathbb{R}^{d})}^{2}\nonumber\\
 &\leq& c\left(\|\widehat{u}_{0}\|_{L^{2}(\mathbb{R}^{d})}^{2} + t^{2}\|\widehat{u}_{1}\|_{L^{2}(\mathbb{R}^{d})}^{2} + \| \int\limits_{0}^{t}(t-s)h(s) \widehat{w}(s,\cdot)ds\|_{L^{2}(\mathbb{R}^{d})}^{2} \right)    
\end{eqnarray}
By applying the same argument as in   \eqref{Eq4_6}, we obtain
\begin{equation}\label{Eq4_18} 
\| \int\limits_{0}^{t}(t-s)h(s)\widehat{w}(s,\cdot)ds\|_{L^{2}(\mathbb{R}^{d})}^{2} \leq  t^2 \|h\|^2_{L^2(0,T)}   \int\limits_{0}^{t} \|\widehat{w}(s, \xi)\|_{L^{2}(\mathbb{R}^{d})}^{2}ds.   
\end{equation}
Next, by combining \eqref{Eq4_6}, \eqref{Eq4_17} and \eqref{Eq4_18} and applying the Plancherel identity \eqref{Plancherel}, we can deduce the following estimate  
\begin{eqnarray}\label{Eq4_19}      
\|u(t)\|_{L^{2}(\mathbb{R}^{d}_{\theta})}^{2}   
& \overset{\eqref{Eq4_18}}{\leq}& c \left( \|\widehat{u}_{0}\|_{L^{2}(\mathbb{R}^{d})}^{2} + t^{2}\|\widehat{u}_{1}\|_{L^{2}(\mathbb{R}^{d})}^{2} +t^2 \|h\|^2_{L^2(0,T)}   \int\limits_{0}^{t} \|\widehat{w}(s, \xi)\|_{L^{2}(\mathbb{R}^{d})}^{2}ds \right)\nonumber\\
&=& c \left(  \|u_{0}\|_{L^{2}(\mathbb{R}^{d}_{\theta})}^{2} + t^{2}\|u_{1}\|_{L^{2}(\mathbb{R}^{d}_{\theta})}^{2} +  t^{2}\|h\|^{2}_{L^{2}(0,T)}\int\limits_{0}^{t}\|\mathcal{A}u(s)\|^{2p}_{L^{2}(\mathbb{R}^{d}_{\theta})}ds  \right)\\
&\leq& c\left(  \|u_{0}\|_{L^{2}(\mathbb{R}^{d}_{\theta})}^{2} + t^{2}\|u_{1}\|_{L^{2}(\mathbb{R}^{d}_{\theta})}^{2} +  t^{2}\|h\|^{2}_{L^{2}(0,T)}\int\limits_{0}^{t}\|u(s)\|^{2p}_{L^{2}(\mathbb{R}^{d}_{\theta})}ds\right).\nonumber
\end{eqnarray}
 Thus, taking the  $L^{\infty}$-norm on both sides of the  estimate \eqref{Eq4_19} yields
\begin{equation}\label{Eq4_20}
\|u\|^{2}_{L^{\infty}((0,T];L^{2}(\mathbb{R}^{d}_{\theta}))}  \overset{\eqref{Eq4_19}}{\leq} c \left(  \|u_{0}\|_{L^{2}(\mathbb{R}^{d}_{\theta})}^{2} + T^{2}\|u_{1}\|_{L^{2}(\mathbb{R}^{d}_{\theta})}^{2}\right.  
 +\left.  T^{3}\|h\|^{2}_{L^{2}(0,T)}\|u\|^{2p}_{L^{\infty}((0,T];L^{2}(\mathbb{R}^{d}_{\theta}))}  \right).
\end{equation}
Let us set $\mathcal{X}_{1}$ by  
\begin{equation*}
    \mathcal{X}_{1}:=\left\{u\in L^{\infty}((0,T];L^{2}(\mathbb{R}^{d}_{\theta})) : \|u\|_{ L^{\infty}((0,T];L^{2}(\mathbb{R}^{d}_{\theta}))}\leq \delta_1 \left(\|u_{0}\|_{L^{2}(\mathbb{R}^{d}_{\theta})}+T\|u_{1}\|_{L^{2}(\mathbb{R}^{d}_{\theta})}\right)\right\},
\end{equation*}
for some constant $\delta_1>1$.  For $u \in \Omega,$ we define the map
$$
(\mathcal{K}_1 u)(t) = u_{0}+tu_{1}+\int\limits_{0}^{t}(t-s)h(s)|\mathcal{A}u(s)|^{p}ds, \quad 0 < t \leq T.
$$
We aim to establish the existence of a unique local solution as the fixed point of $\mathcal{K}_1$ using the Banach fixed point theorem. To begin, we will demonstrate that $\mathcal{K}_1$ maps $\Omega$ into itself. For $u\in \mathcal{X}_{1},$ by \eqref{Eq4_20} we derive 
\begin{eqnarray*} 
\|(\mathcal{K}_1 u)(t)\|_{L^{2}(\mathbb{R}^{d}_{\theta})}^{2}&\overset{\eqref{Eq4_20}}{\leq}&c\left(\|u_{0}\|_{L^{2}(\mathbb{R}^{d}_{\theta})}^{2} + T^{2}\|u_{1}\|_{L^{2}(\mathbb{R}^{d}_{\theta})}^{2} +  T^{3}\|h\|^{2}_{L^{2}(0,T)}\|u\|^{2}_{L^{\infty}((0,T];L^{2}(\mathbb{R}^{d}_{\theta}))}\right)\\
     & \leq& c\left(\|u_{0}\|_{L^{2}(\mathbb{R}^{d}_{\theta})}^{2} + T^{2}\|u_{1}\|_{L^{2}(\mathbb{R}^{d}_{\theta})}^{2} \right.\\
     &+&  \left.T^{3}\|h\|^{2}_{L^{2}(0,T)}\delta_{1}^{2p}\left[ \|u_{0}\|^{2}_{L^{2}(\mathbb{R}^{d}_{\theta})}+T^{2}\|u_{1}\|^{2}_{L^{2}(\mathbb{R}^{d}_{\theta})}\right]^{p}\right). 
\end{eqnarray*}
Choose $T > 0$  such  that 
\begin{equation*}
    T\leq T^{*}:=\min\left[\left(\frac{c_{1}-1}{\|g\|^{2}_{L^{2}(0,T)}\delta^{p-1}\|u_{0}\|^{2p-2}_{L^{2}(\mathbb{R}^{d}_{\theta})}}\right)^{\frac{1}{3}},\left(\frac{c_{1}-1}{\|g\|^{2}_{L^{2}(0,T)}\delta^{p-1}\|u_{1}\|^{2p-2}_{L^{2}(\mathbb{R}^{d}_{\theta})}}\right)^{\frac{1}{3}}\right].
\end{equation*} Consequently,  
$$
\|\mathcal{K}_1 (u)\|_{L^{\infty}((0,T];L^{2}(\mathbb{R}^{d}_{\theta}))}\leq \delta \|u_{0}\|_{L^{2}(\mathbb{R}^{d}_{\theta})},
$$
which shows that $\mathcal{K}_1(u)\in \mathcal{X}_1.$

Next, we demonstrate that $\mathcal{K}_1$ is a contraction map.  By repeating the argument used in \eqref{Eq4_13}, we have 
\begin{eqnarray}\label{Eq5_10}
\||\mathcal{A}(u)|^p-|\mathcal{A}(v)|^p\|_{L^2(\mathbb{R}^d_{\theta})}&\overset{\eqref{Eq4_13}}{\leq}&  2c_2m^{p} (\|u\|^{p-1}_{L^{2}(\mathbb{R}^d_{\theta})}+\|v\|^{p-1}_{L^{2}(\mathbb{R}^d_{\theta})})\| u- v\|_{L^{2}(\mathbb{R}^d_{\theta})}\nonumber \\
&{\leq}&  4c_2m^{p} \delta^{p-1} \left(\|u_{0}\|^2_{L^{2}(\mathbb{R}^{d}_{\theta})}+T^2\|u_{1}\|^2_{L^{2}(\mathbb{R}^{d}_{\theta})}\right)\| u- v\|_{L^{2}(\mathbb{R}^d_{\theta})},
\end{eqnarray}
for $u,v \in \Omega_1$. Let $T_2$ be chosen such that 
$$
T_2 := T < \left\{4c_2m^{p} \delta^{p-1} \left(\|u_{0}\|^2_{L^{2}(\mathbb{R}^{d}_{\theta})}+T^2\|u_{1}\|^2_{L^{2}(\mathbb{R}^{d}_{\theta})}\right)\right\}^{-1}.
$$
Then, from \eqref{Eq5_10}, we derive 
\begin{equation}\label{Eq5_11}
\|(\mathcal{K}_1u)(t) - (\mathcal{K}_1v)(t)\|_{L^2(\mathbb{R}^d_{\theta})}\leq\int\limits_{0}^{t} \||\mathcal{A}u(s)|^{p}- |\mathcal{A}v(s)|^{p}\|_{L^2(\mathbb{R}^d_{\theta})}ds\overset{\eqref{Eq5_10}}{\leq}\| u- v\|_{L^{2}(\mathbb{R}^d_{\theta})},
\end{equation}
for all $T \leq T_2$. Taking the $L^{\infty}$-norm on both sides of   \eqref{Eq5_11} with respect to $T \leq \min\{T_0, T_2\}$, we obtain 
\begin{eqnarray*}
    \|\mathcal{K}_1(u) - \mathcal{K}_1(v)\|_{L^{\infty}((0,T];L^{2}(\mathbb{R}^{d}_{\theta}))} 
    \leq \|u - v\|_{L^{\infty}((0,T];L^{2}(\mathbb{R}^{d}_{\theta}))}, \quad u,v \in \Omega_1, 
\end{eqnarray*}
proving that $\mathcal{K}_1$ is a contraction on $\Omega_1$. By the Banach Fixed Point Theorem, $\mathcal{K}_1$ has a unique fixed point $u \in \Omega_1$ such that $u(t) = (\mathcal{K}_1u)(t)$. It follows that the problem \eqref{Eq4_14} has a unique solution $u \in \Omega_1 \subset L^{\infty}((0,T];L^2(\mathbb{R}^d_{\theta}))$ for all $T \leq \min\{T_0, T_2\}$. The proof of the uniqueness of the solution  follows from the same approach as in Theorem \ref{Mainthm_2}.

We now need to prove part (ii). To begin, using the same reasoning as in the proof of part (i), we arrive at \eqref{Eq4_20}. Thus, based on our assumptions, we obtain
 \begin{equation*}
     \|u\|^{2}_{L^{\infty}((0,T];L^{2}(\mathbb{R}^{d}_{\theta}))}\leq C(   \|u_{0}\|_{L^{2}(\mathbb{R}^{d}_{\theta})}^{2}+ T^{3-2\gamma} \|u\|_{L^{\infty}((0,T]; L^{2}(\mathbb{R}^{d}_{\theta})}^{2p}).
 \end{equation*}
 For fixed constant $c_2> 1$, let us introduce by $\mathcal{X}_{c_{2}}$ the following set
\begin{equation}
    \mathcal{X}_{c_{2}}:=\left\{u\in L^{\infty}((0,T];L^{2}(\mathbb{R}^{d}_{\theta})) : \|u\|^{2}_{L^{\infty}((0,T];L^{2}(\mathbb{R}^{d}_{\theta}))}\leq c_{2}T^{\gamma_{0}}\|u_{0}\|^{2}_{L^{2}(\mathbb{R}^{d}_{\theta})} \right\},
\end{equation}
where  a constant  $\gamma_{0}>0$ will be defined later.  Now, for $u\in \mathcal{X}_{c_{2}},$ it holds that
\begin{equation*}
    \|u_{0}\|^{2}_{L^{2}(\mathbb{R}^{d}_{\theta})} +T^{3-2\gamma}\|u\|^{2p}_{L^{\infty}((0,T];L^{2}(\mathbb{R}^{d}_{\theta}))}\leq   \|u_{0}\|^{2}_{L^{2}(\mathbb{R}^{d}_{\theta})} +T^{3-2\gamma+\gamma_{0}p}c^{p}\|u\|^{2p}_{L^{2}(\mathbb{R}^{d}_{\theta})}.
\end{equation*}
To ensure that $u\in \mathcal{X}_{c_{2}},$ it is necessary that
\begin{equation*}
 \|u_{0}\|^{2}_{L^{2}(\mathbb{R}^{d}_{\theta})} +T^{3-2\gamma+\gamma_{0}p}c^{p}\|u\|^{2p}_{L^{2}(\mathbb{R}^{d}_{\theta})}\leq c_{2}T^{\gamma_{0}}\|u\|^{2p}_{L^{2}(\mathbb{R}^{d}_{\theta})}.
\end{equation*}
Thus, by choosing $0<\gamma_{0}<\frac{2\gamma-3}{p}$ such that
\begin{equation*}
    \Tilde{\gamma}:=3-2\gamma+\gamma_{0}p<0,
\end{equation*}
we obtain
 \begin{equation*}
   c^{p}\|u_{0}\|^{2p-2}_{L^{2}(\mathbb{R}^{d}_{\theta})} \leq c_{2} T^{-\Tilde{\gamma}+\gamma_{0}}. 
\end{equation*}
Based on the previous estimate, we conclude that for any $T > 0$, there exists a sufficiently small $\|u_{0}\|_{L^{2}(\mathbb{R}^{d}_{\theta})}$ such that the initial value problem \eqref{Eq4_14} admits a solution. This establishes part (ii) of the Theorem \ref{wave-thm}.
\end{proof}
\begin{cor}  Let $p\geq2$ integer number. Suppose that $\mathcal{A}$ is a Fourier multiplier such that its symbol $\sigma$ satisfies  condition \eqref{Symbol_condition}.
\begin{itemize}
\item [(i)] If $\|h\|_{L^{2}(0,T)}<\infty$ for some $T>0$, then the Cauchy problem \eqref{Eq4_14} has a local solution in $L^{\infty}((0,T]; L^{2}(\mathbb{R}^{d}_{\theta}))$.
\item [(ii)] Suppose that $u_{1}$ is identically equal to zero. Let $\gamma > 3/2$. Moreover, assume that $\|h\|_{L^{2}(0,T)} \leq c T^{-\gamma}$ for every $T > 0$, where $c$ does not depend on $T$. Then, for every $T > 0$, the wave equation (or Cauchy problem) \eqref{Eq4_14} has a solution in the space  $L^{\infty}((0,T]; L^{2}(\mathbb{R}^{d}_{\theta}))$ for all $u_{0}\in L^{2}(\mathbb{R}^{d}_{\theta})$ with a sufficiently small norm.
\end{itemize}
\end{cor}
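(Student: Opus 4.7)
The plan is to reduce this corollary directly to Theorem \ref{wave-thm} by verifying its hypothesis, namely that the Fourier multiplier $\mathcal{A}$ is a bounded linear operator from $L^{2}(\mathbb{R}^{d}_{\theta})$ to $L^{2p}(\mathbb{R}^{d}_{\theta})$. Once this boundedness is in place, both statements (i) and (ii) follow immediately by invoking parts (i) and (ii) of Theorem \ref{wave-thm} respectively, without any further analysis of the Cauchy problem.

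The key step is thus the application of Theorem \ref{MainTh-1} (the H\"ormander multiplier theorem on $\mathbb{R}^{d}_{\theta}$). Since $p\geq 2$, the exponents $2$ and $2p$ satisfy $1 < 2 \leq 2 \leq 2p < \infty$, so the theorem is applicable with the choice $1/r = 1/2 - 1/(2p)$. By hypothesis, the symbol $\sigma$ of $\mathcal{A}$ satisfies condition \eqref{Symbol_condition} for this pair of exponents, which is exactly the requirement that $\sigma$ lie in the weak-$L^{r}$ space on $\mathbb{R}^{d}$ expressed via its distribution function. Theorem \ref{MainTh-1} then yields the bounded extension
\[
\|\mathcal{A}u\|_{L^{2p}(\mathbb{R}^{d}_{\theta})} \lesssim \|u\|_{L^{2}(\mathbb{R}^{d}_{\theta})}, \quad u \in L^{2}(\mathbb{R}^{d}_{\theta}),
\]
which is precisely inequality \eqref{Eq4_7} in disguise.

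With $\mathcal{A}: L^{2}(\mathbb{R}^{d}_{\theta}) \to L^{2p}(\mathbb{R}^{d}_{\theta})$ bounded, the assumptions of Theorem \ref{wave-thm} are satisfied. For part (i), the assumption $\|h\|_{L^{2}(0,T)} < \infty$ combined with $u_{0}, u_{1} \in L^{2}(\mathbb{R}^{d}_{\theta})$ triggers Theorem \ref{wave-thm}(i), yielding the local solution in $L^{\infty}((0,T]; L^{2}(\mathbb{R}^{d}_{\theta}))$. For part (ii), the assumption $u_{1} \equiv 0$ together with the decay bound $\|h\|_{L^{2}(0,T)} \leq cT^{-\gamma}$ for $\gamma > 3/2$ triggers Theorem \ref{wave-thm}(ii), producing, for every $T > 0$, a solution in $L^{\infty}((0,T]; L^{2}(\mathbb{R}^{d}_{\theta}))$ whenever $\|u_{0}\|_{L^{2}(\mathbb{R}^{d}_{\theta})}$ is sufficiently small.

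There is no genuine obstacle here: the entire content of the corollary lies in observing that the weak-$L^r$ symbol condition \eqref{Symbol_condition} implies $L^{2} \to L^{2p}$ boundedness of $\mathcal{A}$, which is exactly the abstract input consumed by Theorem \ref{wave-thm}. The only care needed is in matching the exponent pair $(p,q) = (2, 2p)$ against the range $1 < p \leq 2 \leq q < \infty$ of Theorem \ref{MainTh-1} and in verifying that the resulting $r = \frac{2p}{p-1}$ matches the space $L^{r,\infty}(\mathbb{R}^{d})$ referenced by the symbol hypothesis.
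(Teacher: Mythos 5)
Your proposal is correct and matches the paper's approach: the paper proves the corresponding corollary for the heat equation (and implicitly this one for the wave equation) by noting that the symbol hypothesis \eqref{Symbol_condition}, applied with exponents $(2,2p)$ via Theorem~\ref{MainTh-1}, gives boundedness of $\mathcal{A}$ from $L^{2}(\mathbb{R}^{d}_{\theta})$ to $L^{2p}(\mathbb{R}^{d}_{\theta})$, after which Theorem~\ref{wave-thm} applies verbatim. Your exponent bookkeeping ($1/r = 1/2 - 1/(2p)$, $r = 2p/(p-1)$, and the check that $1<2\leq 2\leq 2p<\infty$) is exactly the verification the paper relies on.
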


\section{Acknowledgements}
The work was partially supported by the grant from the Ministry of Science and Higher Education of the Republic of Kazakhstan (Grant No. AP23487088). 
The authors were also partially supported by Odysseus and Methusalem grants (01M01021 (BOF Methusalem) and 3G0H9418 (FWO Odysseus)) from Ghent Analysis and PDE center at Ghent University. The first author was also supported by the EPSRC grant EP/V005529/1. 
Authors thank the anonymous referee for reading the paper and providing thoughtful comments, which improved the exposition of the paper.

\begin{center}

\end{center}


\begin{thebibliography}{999} 

    \bibitem{AR} R.~Akylzhanov, M.~Ruzhansky. {\it $L_p$-$L_q$ multipliers on locally compact groups}. J. Funct. Anal., 278(3) (2020), 49, 108324. 

    \bibitem{BW} A.~Barchielli, R.F.~Werner. {\it Hybrid quantum-classical systems: quasi-free Markovian dynamics.} Int. J. Quantum Inf., 22(5):Paper No. 2440002, 51, 2024.
     
    \bibitem{BB} C.~Bastos, O.~Bertolami, Nuno Costa Dias, Joa\'{o} Nuno Prata. {\it Weyl-Wigner formulation of noncommutative quantum mechanics.} J. Math. Phys., 49(7):072101, 24, 2008.
    
    \bibitem{CGRS} A.L.~Carey, V.~Gayral, A.~Rennie, and F.~Sukochev. {\it Index theory for locally compact noncommutative geometries.} Mem. Amer. Math. Soc., 231(1085):vi+130, 2014.
    
    
    \bibitem{CKRT} D.~Cardona, V.~Kumar, M.~Ruzhansky and N.~Tokmagambetov. \emph{$L_{p}-L_{q}$ boundedness of pseudo-differential operators on smooth manifolds and its applications to nonlinear equations.} (2020). 1-37.  arXiv: 2005.04936.
      
   \bibitem{SJR1} S.G.~Cobos, J.E.~Restrepo, M.~Ruzhansky. \emph{ $L^p$-$L^q$ estimates for non-local heat and wave type equations on locally compact groups.} (2024). To appear in C.R. Acad. Sci. Paris.
    
   \bibitem{SJR2} S.G.~Cobos, J.~E.~Restrepo, M.~Ruzhansky. \emph{ Heat-wave-Schr\"odinger type equations on locally compact groups.}  arXiv.2302.00721. 
    
    \bibitem{DW} L.~Dammeier, Reinhard F.~Werner. Quantum-classical hybrid systems and their quasifree transformations. Quantum, 7:1068, 2023.
    
    \bibitem{DPS} P.G.~Dodds, B.~de Pagter and F.A.~Sukochev. {\it Noncommutative Integration and Operator Theory}. Publisher Name Springer, Berlin, Heidelberg. 2023. 

    \bibitem{DDdPS} P.G.~Dodds,  T.K.~Dodds,  B.~de Pagter and   F.A.~Sukochev, {\it Lipschitz continuity of the absolute value and Riesz projections in symmetric operator spaces.} J. Funct. Anal. 148 (1997), no. 1, 28-69.
    
    \bibitem{DLMS} M.~Do\"{r}fler, F.~Luef, H.~McNulty, and E.~Skrettingland. Time-frequency analysis and coorbit spaces of operators. J. Math. Anal. Appl., 534(2):Paper No. 128058, 33, 2024.
    \bibitem{FK} T.~Fack, H.~Kosaki.  {\it Generalized $s$-numbers of $\tau$-measurable operators}, Pacific J. Math., {\bf 123}:2 (1986), 269--300.

       
    \bibitem{GJM} L.~Gao, M.~Junge, and E.~McDonald. {\it Quantum Euclidean spaces with noncommutative derivatives.}  J. Noncommut. Geom. 16 (2022), 153–213.

     \bibitem{GGSVM} V.~Gayral, J.M.~Gracia-Bondia, B.~Iochum, T.~Sch\"ucker, and J.C.~Varilly. {\it Moyal planes are spectral triples.} Comm. Math. Phys., 246(3):569--623, 2004.

     
    \bibitem{GJP} A.M.~Gonz\'{a}lez-P\'{e}rez, M. Junge, and J. Parcet. {\it Singular integrals in quantum Euclidean spaces.}  Mem. Amer. Math. Soc. 272 (2021), no. 1334, xiii+90 pp.


    \bibitem{GJP2} A.M.~Gonz\'{a}lez-P\'{e}rez, M. Junge, and J. Parcet. Smooth Fourier multipliers in group algebras via Sobolev dimension. Ann. Sci. E \'{c}. Norm. Sup\'{e}r. (4), 50(4):879–925, 2017.
          
    \bibitem{green-book} J.M.~Gracia-Bondia, J.~V\'{a}rilly and H.~Figueroa.  {\it  Elements of noncommutative geometry,}  Birkhäuser Boston, Inc., Boston, MA, 2001. xviii+685 pp.
    
        
    \bibitem{Grafakos}
    L.~Grafakos. \emph{Classical Fourier Analysis}, vol. 249 of Graduate Texts in Mathematics, 3rd edn. Springer, New York (2014)

    \bibitem{Gro} H.J.~Groenewold. {\it On the principles of elementary quantum mechanics}. Physica, 12:405--460, 1946.

    \bibitem{H} B. C. Hall. Quantum theory for mathematicians, volume 267 of Graduate Texts in Mathematics. Springer, New York, 2013.
    
    \bibitem{Hor}
    L.~H\"ormander. \emph{Estimates for translation invariant operators in $L_p$ spaces}. Acta Math. \textbf{104} (1960), 93–140.


    \bibitem{KSZ}  J.~Kemppainen, J.~Siljander and  R.~Zacher. {\it Representation of solutions and large-time behavior for fully nonlocal diffusion equations.} J. Differential Equations. 263 (2017) 149-201.

    \bibitem{Kilbas} A.A.~Kilbas. Theory and applications of fractional differential equations. 1st Edition, Volume 204-January 12, 2006. 
    

    \bibitem{KR}  V.~Kumar and M.~Ruzhansky, {\it $L_p-L_q$ boundedness of $(k,a)-$Fourier multipliers with applications to nonlinear equations.} Int. Math. Res. Not. IMRN, 2023, no. 2, 1073-1093. 

    \bibitem{L} L.~Lafleche. On quantum Sobolev inequalities. J. Funct. Anal., 286(10):Paper No. 110400, 40, 2024.

    \bibitem{LSZ} S.~Lord, F.~Sukochev, D.~Zanin. {\it Singular traces. Theory and applications.} De Gruyter Studies in Mathematics, {\bf 46}. De Gruyter, Berlin, 2013.



    \bibitem{MSX}E.~McDonald, F.~Sukochev, and X.~Xiong. {\it Quantum differentiability on noncommutative Euclidean spaces.}  Comm. Math. Phys., 379(2):491--542, 2020.

    
    \bibitem{Mc}E.~McDonald. {\it Nonlinear partial differential equations on noncommutative Euclidean spaces.} J. Evol. Equ. 24:16 (2024), 1--58. 
    
       
    \bibitem{M} J.E.~Moyal. {\it Quantum mechanics as a statistical theory.} Proc. Cambridge Philos. Soc., 45:99-124, 1949.

           
    \bibitem{PXu} G.~Pisier and Q.~Xu. {\it Non-commutative $L_p$-spaces.}  In Handbook of the geometry of Banach spaces, Vol. 2, pages 1459–1517. North-Holland, Amsterdam, 2003.

    \bibitem{Podlubny1999} I.~Podlubny,  Fractional Diferential Equations. Academic Press, San Diego (1999).
    
    \bibitem{XSM2} Zh.~Quanguo  and L.~Yaning. {\it Global well-posedness and blow-up solutions of the Cauchy problem for a time-fractional superdiffusion equation.} J. Evol. Equ. https://doi.org/10.1007/s00028-018-0475-x.

    \bibitem{QH} Zh.~Quan-Guo and S.~Hong-Rui. {\it The blow-up and global existence of solutions of Cauchy problems for a time fractional diffusion equation.} Topol. Methods Nonlinear Anal. 46(1): 69-92 (2015). DOI: 10.12775/TMNA.2015.038 
    

    \bibitem{Rieffel} M.~Rieffel.  {\it Deformation quantization for actions of $\mathbb{R}^d.$} Mem. Amer. Math. Soc. {\bf 106}:506 (1993), x+93 pp.
         
    \bibitem{Tulenov1}
    M.~Ruzhansky, S.~Shaimardan, K.~Tulenov. \emph{$L^p$-$L^q$ boundedness of Fourier multipliers on quantum Euclidean spaces}. 2024. 1--27. arXiv:2312.00657.

 \bibitem{R} J.~Rosenberg. \emph{Noncommutative variations on Laplace's equation}. Anal. PDE, 1(1):95--114, 2008.

    \bibitem{RST2}
    M.~Ruzhansky, S.~Shaimardan, K.~Tulenov. \emph{Sobolev inequality and its applications to nonlinear PDE on noncommutative Euclidean spaces}. (2024), 1-27. arXiv:2408.09100

  

        \bibitem{W} R.~Werner. Quantum harmonic analysis on phase space. J. Math. Phys., 25(5):1404–1411, 1984.
        
      \bibitem{Zhao} M.~Zhao. {\it Smoothing estimates for non commutative spaces}. PhD thesis, University of Illinois at Urbana-Champaign, 2018. http://hdl.handle.net/2142/101570.

      \bibitem{XSM} S.~Xiaoyan, Zh.~Shiliang and L.~Miao.   {\it Local well-posedness of semilinear space-time fractional Schr\"{o}dinger equation} J. Math. Anal. Appl. 479 (2019) 1244–1265. 

     

     
   

\end{thebibliography}
  \end{document}